\newcounter{elno}                % This to number lists
\newtheorem{thm}{Theorem}[section]
\newtheorem{thm*}{Theorem}
\newtheorem{defn}[thm]{Definition}
\newtheorem{prop}[thm]{Proposition}
\newtheorem{lem}[thm]{Lemma}
\newtheorem{cor}[thm]{Corollary}
\newtheorem{notation}[thm]{Notation}
\newtheorem{thmm*}{Theorem}
\newtheorem{blemma}{Basic Lemma -- first form}
\newtheorem{bslem}{Basic Lemma -- second form}
\newtheorem{nprop}{Proposition 1.3A}
\newtheorem{ncor}{Corollary 1.3B}
\newtheorem{nncor}{Corollary 1.3C}
\newcommand{\Z}{{\mathbb {Z}}}
\newcommand{\homology}{{\mathrm {H}}}
\newcommand{\C}{{\mathbb {C}}}
\newcommand{\R}{{\mathbb {R}}}
\newcommand{\V}{{\mathrm {V}}}
\newcommand{\A}{{\mathbb {A}}}
\newcommand{\Hom}{{\operatorname{Hom}}}
\newcommand{\Ext}{{\operatorname{Ext}}}
\newcommand{\HHom}{{\mathbf{Hom}}}
\newcommand{\EExt}{{\mathbf{Ext}}}
\newcommand{\derived}{{\operatorname{R}}}
\newcommand{\coker}{\operatorname{coker}}
\newcommand{\qed}{\nopagebreak\par\hspace*{\fill}$\square$\par\vskip2mm}
\newcommand{\myauthor}[1]{\begin{center}%
{\Large #1}\bigskip
\end{center}}
\newcommand{\myaddress}[1]{\indent {\sc #1}\par}
\newcommand{\myemail}[1]{\indent \emph{E-mail:} {\tt #1}}
\newtheorem{proof}{Proof}
\newtheorem{remark}[thm]{Remark}
\begin{document}

\markboth{Madhav V. Nori}{Constructible Sheaves}
\markright{Constructible Sheaves}

\thispagestyle{plain}
\begin{center}
{\LARGE\bf Constructible Sheaves}
\end{center}

\myauthor{Madhav V. Nori}

\begin{abstract}
 This article contains a proof of the basic lemma, which
yields a motivic proof of the Andreotti Frankel theorem 
for affine varieties. Next, it is shown that
the triangulated category of ``Cohomologically
Constructible Sheaves'' (as it is referred to in the Riemann-Hilbert
correspondence) coincides with the derived category of bounded 
complexes of constructible sheaves. It is also shown that higher
direct images and the sheaf-Ext groups are effaceable in
the category of constructible sheaves. 
\end{abstract}

\section*{Introduction}

     The Andreotti-Frankel theorem asserts that a 
closed complex manifold $X$ of $\C ^N$ of dimension $n$ has the
homotopy type of a cell complex of dimension at most $n$ (see \cite{nori:AF}
or \cite{nori:Mil}). This is achieved by constructing a Morse function 
$f:X\to \R$ with critical points of index at most $n$. Put 
$$
X_a=\{ x\in X: f(x)\leq a\}.
$$ 
The cohomology of the
pair $(X_a,X_b)$ when $b<a$ now enters the picture. But
 the cohomology of these pairs do
not inherit the rich structure (Galois action , mixed Hodge structure,
for instance) that the cohomology of $X$ carries when $X$ is an 
algebraic variety. This is the reason for 
formulating the basic lemma below.

\begin{blemma}[A. Beilinson] 
%\paragraph*{Basic Lemma -- first form (A.Beilinson)}
Let $k$ be a subfield of $\C$. 
Let $W$ be Zariski closed in an affine variety
$X$ defined over $k$. Assume $\dim(W)\!<\!\dim(X)$. 
Then there is a Zariski 
closed $Z$ in $X$ so that $\dim(Z)\!<\nolinebreak\! n$ with $W\subset Z$, 
and $\homology^q(X,Z)=0$ whenever $q\neq \dim (X)$.
\end{blemma}

\medskip

In the above lemma, $\homology^q(X,Z)$ denotes the singular
cohomology of the pair $(X(\C),Z(\C))$.
The cohomological version of the Andreotti-Frankel theorem for affine varieties 
is of course an immediate consequence. 
Indeed if $X$ is affine of dimension $n$, from the above lemma, we 
deduce an increasing sequence of Zariski closed sets $X_i$ of dimension
at most $i$ so that $\homology^j(X_i, X_{i-1})=0$ whenever 
$j\neq i$.  Consequently, the $j$-th cohomology 
of the complex $D^{\bullet}$, where 
$D^i=\homology ^i(X_i,X_{i-1})$ coincides with the $j$-th cohomology of $X$,
which therefore vanishes whenever $j>n$. Furthermore   $D^{\bullet}$
may be regarded as a complex of ``motives''. See also 
\cite{nori:Be1} and \cite{nori:Be2} for such statements for mixed Hodge structures  
and Galois representations.
  
    The proof given here of the basic lemma is geometric.  
It is not hard to see that for our choice
of $Z$ in the basic lemma, $(X(\C),Z(\C))$ is in fact,
upto homotopy, a relative CW pair obtained by
attaching cells of dimension $n$. Furthermore, the same method
gives a triangulation of real affine semi-algebraic sets. 

Our proof is valid only in characteristic zero. The proof of
the intermediate Proposition~1.3 is not valid in positive
characteristic due to the presence of wild ramification.

The only proof of the basic lemma in positive characteristic
is that of Beilinson (\cite[Lemma~3.3]{nori:Be2}). In fact, Beilinson's  
 Lemma~3.3 is a vast generalisation of the basic lemma
 to perverse sheaves, in the sense that it
produces plenty of perverse sheaves with at most one 
nonvanishing hypercohomology. With $X$ and $W$ as in the basic lemma,  
the proof of Lemma~3.3, \cite{nori:Be2}, as Beilinson explained to the author,
yields $Z$ in the following shape: first enlargen $W$ so
that its complement $V=X-W$ is affine and smooth of pure 
dimension $n$, where $n=\dim (X)$. Next, embed 
$X$ as a Zariski locally closed set in projective space, 
and let $H$ be a general hyperplane section of $X$.
Then $\homology^q(X,W \cup H)=0$ for $q \neq n$, and thus we 
may take $Z=W \cup H$ in the basic lemma. Beilinson's Lemma~3.3 
also includes a very general
cohomological (rather than homotopy theoretic) version
 of the ``Lefschetz hyperplane section theorem for complements'':
 with $V=X-W$ and $H$ as above, his result shows that 
$\homology^q(V,V \cap H)=0$ for $q \neq n$. 
 However, Beilinson relies on M. Artin's
 sheaf-theoretic version of the Andreotti-Frankel theorem,
 whereas we deduce M. Artin's theorem (in characteristic
zero).  

  We now turn to constructible sheaves. A  
ring $R$ will be
fixed once and for all. All sheaves considered are sheaves of 
left $R$-modules.  
A subfield $k$ of the complex numbers will remain fixed throughout the 
paper. All varieties and morphisms considered are defined over this field
$k$. By a  ``sheaf on $X$'' we mean a sheaf of $R$-modules on the set
$X(\C)$ of
$\C$-rational points equipped with the usual topology. A sheaf $F$
on a variety $X$ is said to be \emph{weakly constructible}  
if $X$ is the disjoint union of a
finite collection of locally closed subschemes $Y_i$ so that 
the restrictions $F|Y_i$ are all locally constant sheaves.
We also fix a full Abelian subcategory $\mathcal{N}$ of the category of 
all $R$-modules so that every $R$-module $M$ that is isomorphic to
an object of $\mathcal{N}$ is actually an object of $\mathcal{N}$.
We call $F$ \emph{constructible} if, in addition, 
all the stalks of $F$ are objects of $\mathcal{N}$. 
We warn the reader that, in the literature (e.g., \cite[Chapter~VIII]{nori:KS}),
when constructible sheaves are discussed, 
$\mathcal{N}$ is the category of all Noetherian modules.
But we do not place such restrictions on 
  $\mathcal{N}$.

For constructible or weakly constructible sheaves $F$, the sheaf
cohomology   $\homology^q(X(\C),F)$ will be denoted simply by 
$\homology^q(X,F)$. To compute sheaf cohomology, one
takes a resolution  $K^{\bullet}$ of the given sheaf $F$
and considers 
the cohomology of the complex: $\Gamma (X,K^{\bullet})$. 
If $K^{\bullet}$
is an injective, or even a flasque, resolution (e.g., the Godement
resolution), we get:
$$
\homology^q (\Gamma (X,K^{\bullet}))=\homology^q(X,F).
$$

Constructible sheaves are rarely flasque (if so, they are supported
on a finite set). Nevertheless, it is true that every
constructible sheaf $F$ on $\A^n$ has a \emph{constructible} resolution
 $K^{\bullet}$ so that 
$\homology^q( \Gamma (\A^n,K^{\bullet}))=\homology^q(\A^n,F)$ 
 for all $q \geq 0$. Furthermore, we may assume that
$K^q=0$ for all $q>n$. This is a consequence
of Theorem~1 below.  The comparison theorem of Artin-Grothendieck
for etale cohomology (see \cite[Expose~XI]{nori:SGA4}) is the special case of
Theorem~2 for $R=\Z /n\Z$. 

\begin{thm*} Every constructible  sheaf $F$ on affine $n$-space $\A^n$ 
is a subsheaf of a constructible  $G$ so that $\homology^q(\A^n,G)=0$ for all 
$q>0 $.
\end{thm*}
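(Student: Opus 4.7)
My plan is to argue by induction on $n$. For $n=0$ the statement is immediate, since every sheaf on a point has no higher cohomology; take $G=F$. Assume the result on $\A^{n-1}$ and let $F$ be constructible on $\A^n$, equipped with a stratification on which $F$ is locally constant.

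Let $W$ be the union of the strata of dimension $<n$. Applying the Basic Lemma to $(\A^n,W)$ produces a Zariski closed $W\subset Z\subset\A^n$ with $\dim Z<n$ and $\homology^q(\A^n,Z)=0$ for $q\ne n$. Write $U=\A^n\setminus Z$, $L=F|_U$ (locally constant on $U$), $j\colon U\hookrightarrow\A^n$, $i\colon Z\hookrightarrow\A^n$, and consider the canonical short exact sequence
$$0\to j_!L\to F\to i_*F|_Z\to 0.$$
I will separately embed $j_!L$ and $i_*F|_Z$ into constructible sheaves with vanishing higher cohomology on $\A^n$, and then combine them (via the pushout along $j_!L$ followed by a further extension against the embedding of the quotient) to produce the desired $G$ containing $F$.

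For the closed piece $i_*F|_Z$, after a generic linear change of coordinates the projection $\pi\colon\A^n\to\A^{n-1}$ restricts to a finite map on $Z$. The unit of the adjunction $\pi^*\dashv\pi_*$ yields a stalkwise injective morphism $i_*F|_Z\hookrightarrow\pi^*(\pi|_Z)_*F|_Z$, whose target is the $\pi$-pullback of a constructible sheaf $H$ on $\A^{n-1}$. The inductive hypothesis embeds $H$ in some constructible $G'$ with $\homology^q(\A^{n-1},G')=0$ for $q>0$; since the fibres of $\pi$ are affine lines and hence contractible, the projection formula gives $R\pi_*\pi^*=\mathrm{id}$, so $\homology^q(\A^n,\pi^*G')=\homology^q(\A^{n-1},G')=0$ for $q>0$. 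Thus $\pi^*G'$ is a constructible sheaf on $\A^n$ with the required vanishing that contains $i_*F|_Z$.

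The main obstacle is the open piece $j_!L$. In the constant-coefficient case $L=M_U$ with $M\in\mathcal{N}$, one simply embeds $j_!M_U\hookrightarrow M_{\A^n}$; contractibility of $\A^n$ gives $\homology^q(\A^n,M_{\A^n})=0$ for $q>0$, and the stalks of $M_{\A^n}$ lie in $\mathcal{N}$ by construction. For a general local system $L$, the plan is to reduce to the constant case by passing to a finite étale cover $p\colon\tilde U\to U$ trivialising the monodromy of $L$, normalising $\A^n$ in the function field of $\tilde U$ to obtain a finite map $\tilde X\to\A^n$, and then transferring the constant-coefficient embedding back via exactness of finite pushforward together with a second application of the Basic Lemma to $\tilde X$. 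Ensuring that this extension stays within the constructible category, keeps stalks inside the fixed subcategory $\mathcal{N}$, and interacts correctly with the pushout procedure that combines the two pieces into a single sheaf containing $F$, is the technically delicate heart of the argument; it is here that the characteristic-zero hypothesis (tameness of ramification along the cover) enters in an essential way.
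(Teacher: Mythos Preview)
Your proposal has two genuine gaps, and in both places the paper's proof supplies an idea that your sketch does not.

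First, the ``open piece'' $j_!L$ with $L$ a non-constant local system is the entire difficulty, and you have not proved anything there --- you describe a plan (trivialise $L$ on a finite \'etale cover $\tilde U\to U$, normalise $\A^n$ to get $\tilde X\to\A^n$, transfer back) but this plan does not obviously succeed: the constant sheaf $M_{\tilde X}$ on the normalisation $\tilde X$ has no reason to have vanishing higher cohomology, since $\tilde X(\C)$ is an affine variety but not a contractible space. A ``second application of the Basic Lemma'' only kills cohomology outside degree~$n$, not in all positive degrees. (Incidentally, your invocation of the Basic Lemma to produce $Z$ is never actually used: you only need $F|_U$ locally constant and $\dim Z<n$, which is much weaker.) The paper bypasses this by proving a \emph{relative} statement, Proposition~2.2: for $A$ on $\A^1_X$ locally constant off a finite $X$-scheme $V$ and vanishing on $V$, there is a canonical functorial embedding $A\hookrightarrow C$ with $C$ constructible and $\derived^q\pi_*C=0$ for \emph{every} $q\ge 0$. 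This is built by hand from the sequence $0\to B\to p_1^*A\to\Delta_*A\to 0$ on $\A^2_X$ and then $C=\derived^1(p_2)_*B$; no covers or normalisations are needed, and the construction works uniformly for all local systems $A$.

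Second, your ``combine via pushout'' step does not work as stated. Pushing out $0\to j_!L\to F\to i_*F|_Z\to 0$ along $j_!L\hookrightarrow G_1$ gives $0\to G_1\to F_1\to i_*F|_Z\to 0$, but now you need to embed $F_1$ into something acyclic, and merely having $i_*F|_Z\hookrightarrow G_2$ acyclic does not produce such an embedding: the restriction map $\Ext^1(G_2,G_1)\to\Ext^1(i_*F|_Z,G_1)$ need not hit the class of $F_1$. The paper avoids this by using the stronger vanishing $\derived^q\pi_*H'=0$ (all $q$) from Proposition~2.2: after the pushout one gets $H$ with $\pi^*\pi_*H\to H$ a ``$\pi$-isomorphism'', so the inductive embedding $\pi_*H\hookrightarrow J$ on $\A^{n-1}$ can be pushed out along $\pi^*\pi_*H\hookrightarrow\pi^*J$ to yield the final $G$, with $\homology^q(\A^n,G)\cong\homology^q(\A^{n-1},J)=0$. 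The point is that controlling $\derived^q\pi_*$, not merely $\homology^q$, is what makes the two pieces fit together.
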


\begin{thm*} For every constructible sheaf $F$ on a variety
$X$, there is a monomorphism $a:F\to G$ with $G$ constructible  so that 
$$
\homology^q(X,a):\homology^q(X,F) \to \homology^q(X,G)
$$ 
is zero, for all $q>0$.
\end{thm*}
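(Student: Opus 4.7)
The plan is to reduce Theorem~2 to Theorem~1 in two stages: first the affine case, then a reduction to affine charts for general varieties.

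\emph{Affine case.} For $X$ affine, I would fix a closed embedding $i : X \hookrightarrow \A^N$. Since $i$ is a closed immersion, $i_*$ is exact, preserves constructibility, and satisfies the identification $\homology^q(X, -) = \homology^q(\A^N, i_* -)$, while $i^{-1} i_* = \mathrm{id}$. Applying Theorem~1 to the constructible sheaf $i_* F$ on $\A^N$ yields a monomorphism $i_* F \hookrightarrow H$ with $H$ constructible and $\homology^q(\A^N, H) = 0$ for $q > 0$. Setting $G := i^{-1} H$ produces a monomorphism $F \hookrightarrow G$ of constructible sheaves. The induced cohomology map, identified with $\homology^q(\A^N, i_* F) \to \homology^q(\A^N, i_* i^{-1} H)$, factors through $\homology^q(\A^N, H) = 0$ via the unit $H \to i_* i^{-1} H$ and is therefore zero.

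\emph{General case.} I would proceed by induction on $d = \dim X$; the base case $d=0$ is immediate. For $d>0$, pick an open affine $U \subset X$ whose complement $Y = X \setminus U$ has $\dim Y < d$ (working componentwise for reducible $X$, and noting that every variety contains such an affine dense in each component). Applying the affine case to $F|_U$ and the inductive hypothesis to $i^* F$ gives constructible sheaves $G_U$ on $U$ and $G_Y$ on $Y$, together with monomorphisms $F|_U \hookrightarrow G_U$ and $i^* F \hookrightarrow G_Y$, each effacing cohomology on its own piece. A natural candidate is $G := j_* G_U \oplus i_* G_Y$: it is constructible (using that $j_*$ along open immersions of algebraic varieties preserves constructibility) and admits a monomorphism $F \hookrightarrow j_*(F|_U) \oplus i_*(i^* F) \hookrightarrow G$. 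The $Y$-component of the cohomology map vanishes easily: closed pushforward preserves cohomology, so $\homology^q(X, F) \to \homology^q(X, i_* G_Y)$ factors as $\homology^q(X, F) \to \homology^q(Y, i^* F) \to \homology^q(Y, G_Y)$, and the latter arrow is zero by the inductive hypothesis.

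\emph{The main obstacle} is the open component: the map $\homology^q(X, F) \to \homology^q(X, j_* G_U)$ is not governed directly by the vanishing of $\homology^q(U, F|_U) \to \homology^q(U, G_U)$, because the Leray spectral sequence for $j$ involves generally nonzero higher direct images $R^p j_* G_U$. Overcoming this will require either a Yoneda-style pushout refinement of the short exact sequence $0 \to j_!(F|_U) \to F \to i_* i^* F \to 0$ along $i^* F \hookrightarrow G_Y$ (so that the specialization data at the boundary extends compatibly to $G_Y$), or an inductive enlargement of $G_U$ that simultaneously effaces the contributions of $R^p j_* G_U$. Either route will rely on the effaceability of higher direct images within the constructible category, itself one of the paper's parallel main results.
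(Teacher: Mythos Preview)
Your affine case is correct and is exactly Corollary~2.4 of the paper.

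For the general case, however, you have correctly identified a genuine gap, and neither of your proposed repairs works here. Controlling $\homology^q(X,j_*G_U)$ via the Leray spectral sequence would require effaceability of the $\derived^pj_*$; in the paper that is Theorem~4, but Theorem~4 is deduced \emph{after} Theorem~2 (via Theorem~3(b) and the general Theorem~3.11), so invoking it would be circular. The Yoneda-pushout idea along $0\to j_!(F|_U)\to F\to i_*i^*F\to 0$ runs into the same wall: after the pushout you still need to efface $\homology^q(X,j_!G_U)$, and $\homology^q(X,j_!(-))$ is not $\homology^q(U,-)$, so the affine effacement on $U$ does not transfer.

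The paper's route is different in a way that dissolves this obstacle. One first rewrites Theorem~2 as the statement that $R_X$ is \emph{admissible}, meaning $\Ext^q_{Sh(X)}(R_X,\cdot)|C(X)=\homology^q(X,\cdot)|C(X)$ is effaceable, and then proves the stronger claim that $R_X[U]:=j_!R_U$ is admissible for every Zariski open $U\subset X$, by induction on the number of affine opens needed to cover $U$ (Proposition~3.6). The base case is your affine argument; the inductive step applies the Mayer--Vietoris sequence
\[
0 \to R_X[V\cap W] \to R_X[V]\oplus R_X[W] \to R_X[V\cup W] \to 0
\]
together with Lemma~3.2(b), which passes admissibility along such a sequence once one checks that the cokernel of the induced Hom map is effaceable. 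That last check is handled by the embedding $P\hookrightarrow j_*j^*P\oplus i_*i^*P$ (with $j,i$ the inclusions of $V\cap W$ and its complement), which makes the relevant Hom map surjective.

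The conceptual point is that the paper puts $j_!$ on the \emph{source} of $\Ext$ (the sheaf $R_X[U]$) rather than $j_*$ on the \emph{target}; the adjunction $\Hom(j_!(-),Q)=\Hom(-,j^*Q)$ then lets one pass cleanly to the affine open without ever confronting $\derived^pj_*$. Your induction on $\dim X$ with an open/closed pair cannot be made to work without that shift of viewpoint.
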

 
Further effaceability results and their consequences are
formulated below.  We shall denote by $Sh(X)$ the category of sheaves of 
$R$-modules on $X$ and by $C(X)$ the full subcategory of $Sh(X)$ with 
Obj $C(X)$= constructible sheaves. 
An additive functor $T:C(X)\to A$, where $A$ is an Abelian category
is effaceable, if for all objects $F$ of $C(X)$, there is a 
monomorphism $a:F\to G$ in $C(X)$ so that $Ta:TF\to TG$ is
the zero morphism.  

\begin{thm*} \begin{enumerate}
\item[{\rm (a)}] Assume that $R$ is commutative.   
For every weakly constructible sheaf 
$P$ on $X$ such that all the stalks of $P$ are finitely
generated projective modules, and for every $q>0$, 
the functor 
$$
\Ext_{Sh(X)}^q(P, \cdot)|C(X)
$$ 
is effaceable. 
\item[{\rm (b)}] Let $R$ be a field, and let $\mathcal{N}$  be the
category of all finite dimensional vector spaces over $R$.
 Let $F,G$ be constructible sheaves on $X$. Then 
 $\Ext^q_{C(X)}(F,G) \to \Ext^q_{Sh(X)}(F,G)$ is an 
isomorphism for all $q \geq 0$.
\end{enumerate}
\end{thm*}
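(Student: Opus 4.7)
The plan is to identify $\Ext^q_{Sh(X)}(P, F)$ with a singular cohomology and then apply Theorem 2. Since $P_x$ is finitely generated projective over $R$, one has $\Ext^q_R(P_x, F_x) = 0$ for $q > 0$, so $\EExt^q_{Sh(X)}(P, F) = 0$ for $q > 0$. The local-to-global spectral sequence then collapses to
$$
\Ext^q_{Sh(X)}(P, F) \cong \homology^q(X, \HHom(P, F)).
$$
Moreover $\HHom(P, F) \in C(X)$ whenever $F \in C(X)$: on a common refinement stratification it is locally constant, and its stalks $\Hom_R(P_x, F_x)$ are direct summands of $F_x^{\oplus n}$ (since $P_x$ is a summand of $R^{\oplus n}$), hence lie in $\mathcal{N}$. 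The problem therefore reduces to effacing the composite functor $\homology^q(X, \HHom(P, -))|C(X)$.

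Applying Theorem 2 to $\HHom(P, F)$ will produce a monomorphism $\alpha \colon \HHom(P, F) \hookrightarrow K$ in $C(X)$ with $\homology^q(X, \alpha) = 0$. The main step is then to convert this into a monomorphism $\beta \colon F \hookrightarrow G$ in $C(X)$ whose image under $\HHom(P, -)$ inherits the effacement property. For this I would exploit the tensor--hom adjunction $P \otimes_R - \dashv \HHom(P, -)$ (valid since $R$ is commutative and $P$ has locally finitely generated projective stalks, so $\HHom(P, -) \cong P^\vee \otimes -$). Under the twin adjunction $P^\vee \otimes - \dashv P \otimes -$, the map $\alpha$ corresponds to some $\widetilde{\alpha} \colon F \to P \otimes K$; a stalk-wise check using a local basis for $P^\vee$ shows $\widetilde{\alpha}$ is a monomorphism. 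Setting $G = P \otimes K \in C(X)$ gives a candidate $\beta = \widetilde{\alpha}$, with the triangle identity relating $\HHom(P, \widetilde{\alpha})$ to $\alpha$ via the trace $\HHom(P, G) = P^\vee \otimes P \otimes K \to K$.

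\textbf{For part (b):} First I would show that $C(X)$ is closed under extensions in $Sh(X)$. Given $0 \to G' \to G \to G'' \to 0$ in $Sh(X)$ with ends in $C(X)$, pass to a common stratification for $G'$ and $G''$; on any small contractible open $V$ in a stratum, $G'|V$ and $G''|V$ are constant sheaves of finite-dimensional $R$-vector spaces. Since $R$ is a field, $\Ext^1_R$ of the stalks vanishes, so locally the sequence splits and $G$ is locally constant on each stratum with finite-dimensional stalks; thus $G \in C(X)$. This yields at once the isomorphism $\Ext^1_{C(X)}(F, G) \cong \Ext^1_{Sh(X)}(F, G)$. Furthermore, in this setting every constructible sheaf has finitely generated projective stalks, so part (a) applies with $P = F$ and makes $\Ext^q_{Sh(X)}(F, -)|C(X)$ an effaceable --- hence universal --- cohomological $\delta$-functor on $C(X)$.

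For $q \geq 2$, the plan is to induct on $q$. Assuming the inductive hypothesis in degrees $< q$, use part (a) to find a monomorphism $G \hookrightarrow G'$ in $C(X)$ with $\Ext^q_{Sh(X)}(F, G) \to \Ext^q_{Sh(X)}(F, G') = 0$; put $G'' = G'/G \in C(X)$. The commutative ladder of long exact sequences for $\Ext^\bullet_{C(X)}(F, -)$ and $\Ext^\bullet_{Sh(X)}(F, -)$ associated to $0 \to G \to G' \to G'' \to 0$, combined with the inductive isomorphism in degree $q-1$ and a 5-lemma chase, will yield the isomorphism in degree $q$. The principal obstacle throughout is the lifting step in part (a): guaranteeing that the adjoint construction actually kills $\homology^q$ of $\HHom(P, \widetilde{\alpha})$ itself, since the naive choice $G = P \otimes K$ only forces the trace-composed map $\homology^q(X, \HHom(P, G)) \to \homology^q(X, K)$ to vanish on the image, and a further modification of $G$ appears to be needed to promote this to vanishing of $\homology^q(\HHom(P, \widetilde{\alpha}))$ itself.
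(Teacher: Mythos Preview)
Your part (b) is essentially correct and is what the paper dismisses as ``the standard application of effaceability'' (citing Grothendieck's T\^ohoku). The real content is part (a), and there your argument breaks at the very first step.

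\medskip

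\textbf{The error.} You assert that $\EExt^q_{Sh(X)}(P,F)=0$ for $q>0$ because $\Ext^q_R(P_x,F_x)=0$. This implication is false: the stalk of $\EExt^q(P,F)$ at $x$ is \emph{not} $\Ext^q_R(P_x,F_x)$ unless $P$ is locally constant near $x$. For a concrete counterexample take $X=\A^1$, $U=\A^1\setminus\{0\}$, $j:U\hookrightarrow X$, and $P=j_!R_U$. Every stalk of $P$ is $R$ or $0$, hence finitely generated projective, yet
\[
\EExt^q(j_!R_U,\,R_X)\;\cong\;\derived^q j_*R_U,
\]
whose stalk at $0$ is $\homology^q$ of a punctured disk, nonzero for $q=1$. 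So the local-to-global spectral sequence does \emph{not} collapse, and you cannot identify $\Ext^q_{Sh(X)}(P,F)$ with $\homology^q(X,\HHom(P,F))$ for general weakly constructible $P$.

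\medskip

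\textbf{How the paper proceeds.} The identification you want \emph{is} valid when $P=L$ is locally constant on its ambient variety. The paper therefore reduces to that case by a d\'evissage: Proposition~3.10 filters $P$ so that each graded piece is a direct summand of an ``elementary'' sheaf $j_!(L/L[V])$ with $L$ locally constant on $U$. For such $L$ your spectral-sequence argument gives $\Ext^q_{Sh(U)}(L,D)=\homology^q(U,\HHom(L,D))$, and then Theorem~2 together with Remark~3.8(c) (the adjoint-functor transfer) shows $L$ is admissible on $U$. The remaining work is to push admissibility through the quotient $L\to L/L[V]$ (Lemma~3.5(a)), through $j_!$ (Remark~3.8(a)), and through the filtration (Corollary~3.3). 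None of these steps is automatic: Lemma~3.5(a) in particular requires producing, for each constructible $Q$, a constructible embedding $Q\hookrightarrow j_*j^*Q\oplus i_*i^*Q$ so that the relevant $\Hom$ is surjective, and this uses Proposition~3.4 on constructibility of $\derived^qj_*$.

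\medskip

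\textbf{On your ``principal obstacle''.} The lifting step you flag at the end --- turning an effacement of $\homology^q(X,\HHom(P,\,\cdot\,))$ into an effacement arising from a monomorphism $F\hookrightarrow G$ in $C(X)$ --- is actually the easy part. It is exactly the content of Remark~3.8: whenever $P\otimes(\,\cdot\,)$ is left exact (it is, since the stalks of $P$ are flat), one forms $G$ as the pushout of the counit $P\otimes\HHom(P,F)\to F$ along $P\otimes\alpha$; the resulting $F\hookrightarrow G$ is a monomorphism and $\HHom(P,F)\to\HHom(P,G)$ factors through $\alpha$. Your own construction via $\widetilde{\alpha}:F\to P\otimes K$ has a separate flaw: it need not be a monomorphism at points where $P_x=0$.
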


Throughout the paper, $\Ext^q_{\mathcal{A}}(F,G)$ denotes the Yoneda Ext group 
for objects $F$ and $G$ of an Abelian category $\mathcal{A}$.
That Theorem~3(a) implies Theorem~3(b) is the
standard application of effaceability (see 
\cite[Prop.~2.2.1,~page~141]{nori:Toh} ). Theorem~3(b) implies (see e.g., 
\cite[Lemma~1.4]{nori:Be2})
that the derived category $D^b(C(X))$, with $R=k=\C$ and $\mathcal{N}$
=finite dimensional vector spaces, is equivalent to the triangulated 
category of ``cohomologically constructible sheaves'' 
as it is referred to in
the Riemann-Hilbert correspondence.  In view of Beilinson's
result (\cite[Main~theorem~1.3,~page~29]{nori:Be2}), one may re-state
the Riemann-Hilbert correspondence (\cite[Thm.~14.4]{nori:Bo}) as follows:

\medskip
 
\emph{The derived category of bounded complexes of regular holonomic modules 
is equivalent to the derived category of bounded complexes of
constructible sheaves of complex vector spaces.}

\begin{thm*} Assume that $R$ is a field, and that
 $\mathcal{N}$=all finite dimensional vector spaces. 
 Let $f:X \to Y$ be a morphism. Then the functors
 $\derived ^qf_*|C(X)$ are effaceable for all $q>0$. 
\end{thm*}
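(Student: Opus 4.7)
The strategy is to reduce the effaceability of $\derived^q f_*$ to that of global cohomology (Theorem~2) by localizing on $Y$: a morphism of sheaves on $Y$ vanishes if and only if its stalk at every point $y\in Y$ is zero, and stalks of $\derived^q f_*$ are direct limits of global cohomology groups of pre-images of small opens.

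The first step is to establish that $\derived^q f_* F$ is itself constructible on $Y$ whenever $F\in C(X)$. This should follow by choosing, using the algebraic nature of $f$ and the constructibility of $F$, a stratification of $Y$ over which $f$ becomes a locally trivial topological fibration compatibly with a refined stratification of $X$ adapted to $F$; one then sees that $(\derived^q f_* F)|_S$ is locally constant on each stratum $S\subset Y$. Since $\mathcal{N}$ is the category of finite-dimensional $R$-vector spaces, each stalk $(\derived^q f_* F)_y = \varinjlim_{V\ni y}\homology^q(f^{-1}(V), F|_{f^{-1}(V)})$ is then finite-dimensional.

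Fix $y\in Y$ and choose an open neighborhood $V\ni y$ small enough that a finite basis of $(\derived^q f_* F)_y$ is represented by classes in $\homology^q(f^{-1}(V), F|_{f^{-1}(V)})$. Applying Theorem~2 to the variety $f^{-1}(V)$ and the constructible sheaf $F|_{f^{-1}(V)}$ produces a monomorphism $F|_{f^{-1}(V)}\hookrightarrow H_V$ in $C(f^{-1}(V))$ under which each of these classes dies, so the induced stalk map $(\derived^q f_* F)_y \to (\derived^q f_* H_V)_y$ is zero.

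The last step is to assemble these local solutions into a single constructible monomorphism $a:F\to G$ on $X$ for which $\derived^q f_*(a)=0$ at every point of $Y$ simultaneously and for every $q>0$. I expect one proceeds by Noetherian induction on $Y$, stratifying $Y$ by (the finitely many) $\derived^q f_* F$ and at each stage extending $H_V$ from a tube $f^{-1}(V)$ across $X$ via $j_!$ or $j_*$ for the open immersion of the tube, gluing by means of the standard open/closed distinguished triangle. The main obstacle is precisely this globalization: one must simultaneously preserve constructibility of the extension, the monomorphism property, and the effacement already achieved over previously handled strata, so that after finitely many inductive steps a single $G$ kills every nonzero germ of $\derived^q f_* F$ for every $q>0$ at every $y\in Y$.
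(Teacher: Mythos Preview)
Your outline diverges sharply from the paper's argument and, as written, has a genuine gap at exactly the point you flag as ``the main obstacle.''

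The paper does not localise on $Y$ at all. Instead it proves an abstract effaceability criterion (Theorem~3.11): if $(F,G)$ is an adjoint pair between Abelian categories with $F$ left exact, and if the Ext-effaceability of Theorem~3(b) holds on both sides, then $\derived^eG$ is effaceable on the small subcategory for every $e>0$. The proof is a spectral-sequence chase through
\[
\mathrm{E}_2^{p,q}=\Ext^p_{\mathcal{A}}(A',\derived^qGB)\ \Longrightarrow\ \Ext^{p+q}_{\mathcal{B}}(FA',B),
\]
killing the differentials landing on $\mathrm{E}^{0,e}$ one page at a time by successive monomorphisms $B'_i\hookrightarrow B'_{i+1}$ supplied by Theorem~3(b). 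Theorem~4 is then the single-line specialisation $(F,G)=(f^*,f_*)$, $\mathcal{A}'=C(Y)$, $\mathcal{B}'=C(X)$; hypothesis~(3) is Proposition~3.4 and hypotheses~(1),(2) are Theorem~3(b). No stratification of $Y$, no gluing, no Noetherian induction is needed.

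Your approach, by contrast, attempts to kill each stalk $(\derived^qf_*F)_y$ separately and then globalise. Two concrete problems arise. First, to invoke Theorem~2 you need $f^{-1}(V)$ to be a variety, which forces $V$ to be Zariski open; but the stalk at $y$ is a direct limit over \emph{analytic} neighbourhoods, and a Zariski open $V$ containing $y$ is far too large to control which classes survive to the stalk. Second, and more seriously, even granting a local effacement $F|_{f^{-1}(V)}\hookrightarrow H_V$, the extension of $H_V$ across $X$ by $j_!$ or $j_*$ does not obviously produce a monomorphism out of $F$ that still kills the chosen classes and remains constructible; and once you pass to the complement of $V$ in your induction, the new target sheaf has its own higher direct images to efface. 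You acknowledge this but do not resolve it. The paper's adjunction/spectral-sequence argument is precisely designed to avoid this globalisation difficulty: it works entirely with global $\Ext$ groups, where a single monomorphism in $C(X)$ suffices at each step.
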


For the next theorem, assume that $R$ is commutative and recall that
for sheaves $F,G$ on $X$, we have the sheaf   (of $R$-modules) 
$\HHom(F,G)$ on $X$ and the 
derived functors $\derived^q\HHom(F,\cdot)=\EExt^q(F,\cdot)$.

\begin{thm*} With $R$ and $\mathcal{N}$ 
 as in Theorem~4 above, the functors \linebreak $\EExt^q(P,\cdot)|C(X)$
 are effaceable for all $q>0$ and for all constructible
sheaves $P$ on $X$.   
\end{thm*}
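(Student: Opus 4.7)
The plan is to deduce Theorem~5 from Theorem~4 via a d\'evissage on the source sheaf~$P$. First, a standard principle: if $0 \to P' \to P \to P'' \to 0$ is short exact in $C(X)$ and both $\EExt^q(P',\cdot)|C(X)$ and $\EExt^q(P'',\cdot)|C(X)$ are effaceable, then so is $\EExt^q(P,\cdot)|C(X)$. Indeed, given $F \in C(X)$, first use effaceability for $P'$ to find $F \hookrightarrow F_1$ with $\EExt^q(P',F) \to \EExt^q(P',F_1)$ zero; by the long exact sequence, the image of $\EExt^q(P,F)$ in $\EExt^q(P,F_1)$ then comes from $\EExt^q(P'',F_1)$, which is killed by a further $F_1 \hookrightarrow F_2$ furnished by effaceability for~$P''$. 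Stratifying $X$ so that $P$ is locally constant of finite rank on each stratum, and using $0 \to j_!(P|_U) \to P \to i_*(P|_Z) \to 0$ for $j\colon U \hookrightarrow X$ a generic open stratum, induction on $\dim(\mathrm{supp}\,P)$ reduces to the case $P = j_!L$ with $L$ locally constant of finite rank on~$U$.

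For such $P$, the adjunction between $j_!$ and $j^*$ (which extends to $R\HHom$ since $j^*$ preserves injectives) combined with local projectivity of $L$ (automatic since $R$ is a field) gives
$$
R\HHom_X(j_!L,F) \;=\; Rj_*\,R\HHom_U(L,F|_U) \;=\; Rj_*(L^\vee \otimes F|_U),
$$
and so $\EExt^q(j_!L,F) \cong R^qj_*(L^\vee \otimes F|_U)$. Given $F \in C(X)$, apply Theorem~4 to $j$ and the constructible sheaf $L^\vee \otimes F|_U$ on $U$ to obtain a monomorphism $\alpha\colon L^\vee \otimes F|_U \hookrightarrow M$ in $C(U)$ with $R^qj_*(\alpha) = 0$. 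Take the adjoint monomorphism $F|_U \hookrightarrow L \otimes M$ and form the pushout $G := F \sqcup_{j_!(F|_U)} j_!(L \otimes M)$ in $C(X)$: this produces a monomorphism $F \hookrightarrow G$ in $C(X)$ inducing the zero map on $\EExt^q(j_!L,\cdot)$ at~$F$.

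The main obstacle is the closed-stratum case in the d\'evissage: for $i\colon Z \hookrightarrow X$ closed and $Q \in C(Z)$, the identity $R\HHom_X(i_*Q,F) = i_*\,R\HHom_Z(Q,Ri^!F)$ involves the derived functor $Ri^!$ (controlled by the triangle $i_*Ri^!F \to F \to Rj_*(F|_U)$), so $Ri^!F$ is a complex on $Z$ and the transfer of effaceability from $C(Z)$ to $C(X)$ demands a careful analysis of the associated hypercohomology spectral sequence. A secondary subtlety is the verification in the previous paragraph when $\mathrm{rank}(L) > 1$: the induced map $R^qj_*(L^\vee \otimes F|_U) \to R^qj_*(L^\vee \otimes L \otimes M)$ is not visibly zero from $R^qj_*(\alpha) = 0$ alone, and one either passes to a finite \'etale cover $\pi\colon U' \to U$ trivializing $L$ (reducing the problem to effacing $R^q(j\pi)_*|C(U')$, directly covered by Theorem~4) or argues stalkwise using the local triviality of~$L$.
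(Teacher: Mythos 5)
Your d\'evissage on the first variable does not go through as described, and the case you yourself flag as ``the main obstacle'' is not a technicality but the heart of the matter. After stratifying, every piece of $P$ supported on a proper closed subset $Z$ (equivalently, every extension by zero from a locally closed subset that is not open in $X$) forces you into $\HHom_X(i_*Q,\cdot)$, which is governed by $\derived i^!$; to efface the abutment of the resulting spectral sequence you would need both the constructibility of $\derived^q i^!F$ for constructible $F$ and effaceability statements for functors built out of $i^!$ --- none of which is supplied by Theorem~4, and which is of essentially the same depth as Theorem~5 itself. So, as written, your argument covers only $P=j_!L$ with $U$ Zariski open in $X$, and the induction on $\dim(\operatorname{supp}P)$ has no way to restart on the closed stratum. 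The paper avoids this issue entirely: it does not d\'evisse $P$ for the purpose of effaceability, but applies Theorem~3.11 to the adjoint pair $(P\otimes\cdot\,,\HHom(P,\cdot))$ with $\mathcal{A}=\mathcal{B}=Sh(X)$ and $\mathcal{A'}=\mathcal{B'}=C(X)$; hypotheses (1) and (2) there are exactly Theorem~3 (effaceability of the global $\Ext^q_{Sh(X)}(A',\cdot)|C(X)$, available for every constructible $A'$ because $R$ is a field), and Proposition~3.10 enters only to verify hypothesis (3), namely that $\EExt^q(P,Q)$ is constructible, via $\EExt^q(j_!L,Q)=\derived^qj_*\HHom(L,j^*Q)$ and Proposition~3.4. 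Your proposal never invokes Theorem~3, which is the essential input; Theorem~4 alone cannot carry the closed stratum.

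The secondary difficulty you raise in the open case is also real, and neither of your proposed remedies works: a finite \'etale cover trivializing $L$ need not exist (the monodromy of a local system can be infinite), and a stalkwise argument does not produce the single monomorphism $F\to G$ in $C(X)$ that effaceability requires. That case, however, is repairable in the spirit of the paper by Remark~3.8: the functor $F\mapsto L^\vee\otimes j^*F$ from $C(X)$ to $C(U)$ has the exact left adjoint $N\mapsto j_!(L\otimes N)$, so effaceability of $\derived^qj_*|C(U)$ (Theorem~4 applied to $j$) transfers to the composite $\derived^qj_*(L^\vee\otimes j^*(\cdot))=\EExt^q(j_!L,\cdot)|C(X)$ with no restriction on the rank of $L$. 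But this repair does not touch the $i_*Q$ case, which remains a genuine gap in your argument.
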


The minimal hypotheses on $R$ and $\mathcal{N}$  under
which Theorems~3(b),~4 and~5 are valid remain unclear. 
It is hoped that Theorems~1--5, or at least the method of proof,
will help in constructing a category of motivic sheaves. The
reader will observe that the proofs of Theorems~1 and 2
 require nothing more than the formal properties
 of the operations $f^*$ and 
$\derived^ qf_*$ while Theorems~3, 4 and 5 require the internal
 $\HHom$. That the analogues of Theorems~1--5 hold
for  $\mathbb{Q\,}_l$- sheaves (see \cite[page~84]{nori:SGAfourhalf}) is 
immediate from the proofs given here. 

The Noether normalisation lemma plays a major role in the proofs
of the sheaf-theoretic version of the basic lemma and Theorem~1. These proofs 
appear in the first two sections.
Some care has been taken to keep these sections self-contained
and elementary (modulo the use of the Leray spectral sequence and the 
proper base change theorem). For this purpose, direct proofs
of weak constructibility appear in these sections.  
In the third section, the 
remaining theorems are essentially deduced from Theorem~1, 
Whitney stratifications, devissage, and a 
reduction from \cite{nori:SGA4}, \cite{nori:SGAfourhalf} 
(embedding a sheaf $F$ on $X$ in 
$i_*i^*F \oplus j_*j^*F$ when the images of $i$ and $j$ 
cover $X$.

The first proof of Theorem~3(b), not the one
given here, was found with some aid in homological algebra from
P. Deligne. The version of Lemma~3.2(b) given here is based on a
remark of V. Srinivas.    The author thanks both P. Deligne and V. Srinivas
for useful discussions, and most of all, A. Beilinson for 
taking the trouble to explain in detail the very first  
consequences of his far-reaching Lemma~3.3.

\section{The basic lemma for sheaves}

A subfield $k\subset\C$ remains fixed throughout the paper. The
phrases ``sheaf on $X$'', ``weakly constructible sheaf'', etc. are
as in the introduction. All varieties and morphisms considered
are defined over $k$. All sheaves considered are sheaves 
of $R$-modules. 
\emph{The cohomology group $\homology ^q(X(\C),F)$
will always be denoted simply by $\homology ^q(X,F)$.}
The lemma below evidently implies the theorem of M. Artin
: ``$\homology^q(X,F)=0$ for $q> \dim(X)$, where $X$
 is affine '' (see \cite[Expose~XIV,~Corollary~3.2]{nori:SGA4}) and 
the analogous results of Hamm and Le (see \cite{nori:HL1} and \cite{nori:HL2})
 for weakly constructible sheaves.
It is unclear whether or not the  powerful relative version, 
due to M. Artin (\cite[Thm.~3.1,~Expose XIV]{nori:SGA4}), 
 can also be obtained directly. 

Beilinson's Lemma~3.3 of \cite{nori:Be2}
covers (at least) the case $F$ constructible, $R$ commutative
Noetherian, 
$\mathcal{N}$= all finitely generated modules, in the lemma 
below. Beilinson works in fields of all characteristics.  
Remark~1.1 below holds for his proof as well.
It would be nice to get a proof of 
this special case of Beilinson's very general lemma
 via Morse theory or the method of pencils. 
This would have the double advantage of covering the general case
and proving the theorems of M. Artin and Andreotti-Frankel
as well.

\begin{bslem}[A. Beilinson]
Let $F$ be a weakly constructible 
sheaf on an affine variety $X$. Let $n=\dim (X)$. 
Then there is a Zariski open $U\subset X$ with the properties below,  
where 
$j:U\to X$ denotes the inclusion, and
$F'=j_!j^*F\subset F$. 
\begin{enumerate}
\item[{\rm (1)}] $\dim (Y)<n$, where $Y$ is the complement of $U$ in $X$,
\item[{\rm (2)}] $\homology ^q (X,F')=0$ for $q\neq n$.
\item[{\rm (3)}]  There is a finite subset $E\subset U(\C)$ 
and an $R$-module isomorphism of 
$\homology ^n (X,F')$ with 
$\oplus \{ F_x:x\in E\}$.
\end{enumerate}
\end{bslem}
%%%%REFERENCES
                
%\paragraph{\textbf{Remark~1.1}} 
\begin{remark}
With $F$ on $X$ as in the lemma above, let $X'$ be
the largest Zariski open subset of $X$ so that $X'$ is smooth
and $F|X'$ is locally constant. The open $U$ in the statement
of the lemma \emph{depends only on} $X'$ \emph{and not on} $F$,
as can be seen from the proof.
 
The proof given uses the Noether normalisation lemma twice. If
in both these places, one uses general linear projections instead, one
sees that the $U$ in the lemma can be chosen
to contain any given finite subset of $X'(\C)$. In particular,
\emph{the $U$ for which the lemma holds cover $X$, if $F$ is locally
constant and if $X$ is smooth}. 
\end{remark}

%\paragraph{\textbf{Remark~1.2}} 
\begin{remark}
The first form of the basic lemma, 
as given in the introduction,
is an immediate consequence. If $R_X$ denotes the constant sheaf
on $X$ and $v: V\to X$ is a Zariski open immersion with $W$
as its complement, then the sheaf cohomology
$\homology ^q(X,F)$, where $F=v_!v^*R_X$  
coincides with the singular cohomology
$\homology ^q(X,W;R)$. So, the above lemma applied to $F$ 
and $R=\Z$ yields
$Y$ with $\dim (Y)<n$ and $\homology ^q(X,Y\cup W)=0$
 for $q\neq n$, and   $\homology ^n(X,Y\cup W)=0$ 
is a free Abelian group. The universal
coefficient theorem gives the same result for all $R$ and
for homology as well.
\end{remark}

%\paragraph{\textbf{Proof of the basic lemma (second form):}} 
\begin{proof}[of the basic lemma (second form)]
Because the direct image of a locally
constant sheaf under a covering projection is locally constant,
it follows easily that the direct image of a weakly
constructible sheaf under a finite morphism is weakly 
constructible.   
With $X$ and $F$ as in the lemma, by Noether normalisation,
we have a finite morphism  $\pi:X\to \A^n$. Assuming the lemma
for the sheaf $\pi_*F$ on $\A^n$, we get a non-empty 
Zariski open $V\subset \A^n$ with the desired properties.

Putting $U=\pi^{-1}V$ and denoting the inclusions of $U$ 
in $X$ and $V$
in $\A^n$ by $j$ and $v$ respectively, we see that
$\pi_*j_!j^*F=v_!v^*\pi_*F$. Consequently 
 $\homology^q(X,j_!j^*F)=\homology^q(\A^n,v_!v^*\pi_*F )$,
and by the very choice of $V$, the latter vanishes for
 $q \neq n$. This proves part (2) of the lemma. 
Part (3) follows because $(\pi_*F)_y$
is the direct sum of $F_x$ taken over $x \in \pi^{-1}y$.

  It remains to prove the lemma for affine space.
We will proceed
by induction on dimension. Given a weakly constructible sheaf $F$
on $\A^n$, choose a \emph{nonconstant} $f$ in the co-ordinate
ring of $\A^n$ so that the restriction $F|\mathrm{D}(f)$ is
locally constant, where $\mathrm{D}(f)$ and $\V(f)$, are, as usual,
$\{ x|\,f(x)\neq 0\}$ and $\{x|\,f(x)=0\}$ in $\A^n$
respectively. 
Clearly, $F$ may be replaced by its subsheaf $d_!d^*F$ where
$d$ denotes the inclusion of $\mathrm{D}(f)$ in $\A^n$.
\emph{Thus we will assume that $F|\V(f)=0$.} 
Next,
after a linear change of variables, we may assume that $f$
is monic in the last variable. 
Denote by $\pi : \A^n \to
\A^{n-1}$ the projection $(x_1,x_2,\ldots ,x_n)\mapsto 
(x_1,x_2,\ldots, x_{n-1})$. The restriction $\pi |\V(f)$ is now
both \emph{finite} and \emph{surjective}. For $y\in \A^{n-1}(\C)$,
we consider the cohomology groups $\homology ^q
(\pi^{-1}y,F|\pi^{-1}y)$. That this vanishes for $q>1$  
is standard (e.g., Remark~1.4 below). 
Because $\V(f)\cap \pi^{-1}y$ is \emph{non-empty} and the restriction of
$F$ to this intersection is zero, we see that 
$\homology^0(\pi^{-1}y ,F|\pi^{-1}y)=0$ as well. 
Because $\pi |\V(f)$
is a finite morphism, we may apply VPBC (Proposition~1.3A below)
  to conclude that $\derived^q\pi_*F=0$
for all $q\neq 1$.  
For a non-empty Zariski open $U'\subset \A^{n-1}$ (that will be chosen
later),
let $U=\pi^{-1}U'$.
Let $j':U'\to \A^{n-1}$ and $j:U\to \A^n$ denote the given inclusions.
Put $G=\derived^1\pi_*F$. Let $G'=j'_!{j'}^*G$ and let $F'=j_!j^*F$.
From Corollary~1.3C below, it follows that 
$ \derived^q\pi_*F'=0$ for $q\neq 1$ and also that 
$G'\to \derived^1\pi_*F'$ is an isomorphism.
 From the  
Leray spectral sequence, we deduce that 
$\homology^q(\A^{n-1},G')$ is isomorphic to
$\homology^{q+1}(\A^n, F')$ for all $q$.
Checking easily that $G$ is weakly constructible 
(see Remark~1.5), 
apply the basic lemma to the pair  $(\A^{n-1}, G)$ to get a non-empty 
$U'\subset \A^{n-1}$ with the desired properties. It follows that 
$\homology^q(\A^n, F')$ vanishes for all $q\neq n$.
Finally, for part (3) of the lemma, 
$\homology^n(\A^n, F')=\homology^{n-1}(\A^{n-1},G')$
is isomorphic to a finite direct sum of stalks $G_y$ 
for $y\in U'(\C)$ by the induction hypothesis. But each such 
$G_y=\homology ^1(\pi^{-1}y,F|\pi^{-1}y)$ is
 isomorphic to a finite direct sum of stalks of $F|\pi^{-1}y$
from Remark~1.4 below.
This completes the proof of the basic lemma (modulo 1.3--1.5).
\qed\end{proof}

\emph{The statements 1.3A--C below are for topological
spaces and for sheaves of Abelian groups. In (H3) below, a 
topological space $Z$ is ``simply connected'' if every
locally constant sheaf $F$ on $Z$ is a constant sheaf. This is weaker
 than 
demanding that $Z$ is path-connected and has trivial fundamental group. 
If $f:L \to M$ is
 a fiber bundle(this is so in all applications), then (H2) holds locally on 
$M$}.

\begin{nprop}[Variation of proper base-change]
%\paragraph*{Proposition 1.3A (Variation of proper base-change)}
Let $\overline{f}\!:\!\overline{L}\! \to\nolinebreak\! M$
be a proper continuous map, where $\overline{L}$ and $M$
are second countable locally compact Hausdorff spaces. Let
$L \subset  \overline{L}$ be open and let $A$ be a sheaf on $L$.
Put $f=\overline{f}|L$. Assume (H1), (H2) and (H3) below.
\begin {enumerate}
\item[{\rm (H1)}] There is a closed subset $L_1\subset L$ so that
$f|L_1$ is proper and $A|L-L_1$ is a locally constant sheaf.
\item[{\rm (H2)}] For every point $x \in  \overline{L}-L$ 
there is a neighbourhood  $U$ of $x$ in $\overline{L}$ so that 
$\overline{f}(U)$ is open and 
$\overline{f}|(U,U \cap L) \to \overline{f}(U)$ is a fiber
bundle pair.
\item[{\rm (H3)}] Every point $m \in M$ has a fundamental system of 
simply connected neighbourhoods.  
\end{enumerate}
Then $\derived^qf_*A_m \to \homology^q(f^{-1}m, A|f^{-1}m)$
is an isomorphism for all $m \in M$.
\end{nprop}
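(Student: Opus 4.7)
The plan is to factor $f$ as $f=\overline{f}\circ\overline{j}$, where $\overline{j}\colon L\hookrightarrow\overline{L}$ is the open inclusion, and reduce the claim to two applications of the classical proper base-change theorem (legal because $\overline{f}$ and $\overline{f}|\overline{f}^{-1}m$ are proper maps of second countable locally compact Hausdorff spaces). Since $\overline{j}_{*}$ preserves injectives, the Grothendieck composition isomorphism gives $\derived f_{*}A = \derived\overline{f}_{*}(\derived\overline{j}_{*}A)$, and proper base-change for $\overline{f}$ then yields
\[
(\derived^{q}f_{*}A)_{m}\ \cong\ \mathbb{H}^{q}\bigl(\overline{f}^{-1}m,\ (\derived\overline{j}_{*}A)|\overline{f}^{-1}m\bigr).
\]
A second proper base-change, applied to $\overline{f}^{-1}m\to\{m\}$, identifies the right-hand side with $\homology^{q}(f^{-1}m,A|f^{-1}m)$ as soon as one establishes the ``base-change on the fiber''
\[
(\derived\overline{j}_{*}A)|\overline{f}^{-1}m\ \cong\ \derived(j_{m})_{*}(A|f^{-1}m),
\]
where $j_{m}\colon f^{-1}m\hookrightarrow\overline{f}^{-1}m$ is the induced open inclusion of fibers. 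This fiberwise identification is the crux.

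To verify it I would argue stalkwise. First observe that $L_{1}$ is automatically closed in $\overline{L}$: a sequence in $L_{1}$ converging to a point $x\in\overline{L}-L$ would, by properness of $f|L_{1}$, admit a subsequence converging inside $L_{1}$, contradicting Hausdorffness of $\overline{L}$. Consequently, at every $x\in\overline{f}^{-1}m\cap(\overline{L}-L)$ the sheaf $A$ is already locally constant on $U\cap L$ for a small neighborhood $U$ of the form furnished by (H2). Shrinking $\overline{f}(U)$ to a simply connected neighborhood of $m$ via (H3), the fiber-bundle pair is trivialized as $(U,U\cap L)\simeq \overline{f}(U)\times(\overline{F}_{x},F_{x})$, and simple connectivity of the base forces $A|(U\cap L)$ to be the pullback of a single locally constant sheaf from $F_{x}$. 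In this product situation a direct computation shows that $\derived\overline{j}_{*}A$ is the pullback of $\derived(j_{F_{x}})_{*}(A|F_{x})$ from $\overline{F}_{x}$, whose restriction to $\{m\}\times\overline{F}_{x}$ is the desired formula. At points of $f^{-1}m$ already interior to $L$, the map $\overline{j}$ is locally a homeomorphism and the identity is trivial, so the local computations glue.

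The main obstacle is precisely the fiberwise base-change just sketched: it is the only step that genuinely uses the geometric content of the hypotheses, all three of which conspire here. (H1) (via closedness of $L_{1}$ in $\overline{L}$) guarantees that $A$ is locally constant near the boundary, so that $\derived\overline{j}_{*}A$ is determined by fiberwise data there; (H2) supplies the local product structure needed to write that data as a pullback; and (H3) simultaneously kills the monodromy of $A|(U\cap L)$ and arranges a cofinal system of good neighborhoods of $m$ over which the trivializations patch. Once the fiberwise identification is in hand, both applications of proper base-change and the rewriting $\mathbb{H}^{q}(\overline{F},\derived(j_{m})_{*}A')=\homology^{q}(F,A')$ are standard.
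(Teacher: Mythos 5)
Your proof is correct and follows essentially the same route as the paper: the same factorization $f=\overline{f}\circ j$, proper base change for the proper map $\overline{f}$, and the reduction to the fiberwise isomorphism $\derived^q j_*A|\overline{L}_m \to \derived^q (j_m)_*(A|L_m)$, verified stalkwise (trivially at points of $L_m$, and at boundary points via the product structure from (H2), the killing of base-direction monodromy from (H3), and local constancy of $A$ near $\overline{L}-L$ from (H1) --- exactly the paper's intermediate hypothesis (H4)). The only cosmetic difference is that you package the composition as $\derived f_*=\derived\overline{f}_*\,\derived\overline{j}_*$ with a derived-level base change, where the paper writes out the two Leray spectral sequences and a map between them; your remark that properness of $f|L_1$ forces $L_1$ to be closed in $\overline{L}$ nicely makes explicit a point the paper leaves to the reader.
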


\begin{ncor} 
%\paragraph*{Corollary 1.3B}\emph{
With notation and assumptions as in Proposition~1.3A \linebreak above, let
 $g:M' \to M$ be
continuous with $M'$ second countable locally compact Hausdorff. 
Put $L'=M' \times
_M\,L$. Let $f':L' \to M'$ and $g':L' \to L$ denote the projections.
Then
  $g^*\derived ^q f_* A \to \derived^q f'_* {g'}^*A$ 
is an isomorphism.
\end{ncor}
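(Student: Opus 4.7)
The argument is stalkwise on $M'$. Fix $m' \in M'$ and set $m = g(m')$. The stalk at $m'$ of the source $g^{*}\derived^{q}f_{*}A$ equals $(\derived^{q}f_{*}A)_{m}$, which by Proposition~1.3A is $\homology^{q}(f^{-1}(m), A|f^{-1}(m))$. The plan is to identify the stalk at $m'$ of $\derived^{q}f'_{*}{g'}^{*}A$ with the same object in a manner compatible with the base change morphism.

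To do this, I first verify that the pulled-back data $(\overline{L}', L', \overline{f}', f', {g'}^{*}A)$ over $M'$, with $\overline{L}' := M' \times_{M} \overline{L}$ and $L_1' := {g'}^{-1}(L_1)$, satisfies the analogues of (H1) and (H2). The restriction $f'|L_1'$ is the base change of the proper map $f|L_1$ and is therefore proper, while ${g'}^{*}A|(L'-L_1')$ is the pullback of the locally constant sheaf $A|(L-L_1)$ and is therefore locally constant, giving (H1). For (H2), each $x' \in \overline{L}' - L'$ lies over some $x \in \overline{L}-L$ whose (H2)-neighborhood in $\overline{L}$ over $M$ pulls back to the required fiber-bundle-pair neighborhood of $x'$ in $\overline{L}'$ over $M'$.

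The main obstacle is that hypothesis (H3) need not hold on $M'$, so Proposition~1.3A cannot be invoked directly on the target side. I circumvent this by performing the analogous local analysis, using (H3) on $M$ in place of (H3) on $M'$. By (H3), choose a simply connected open neighborhood $V$ of $m$, and set $V' := g^{-1}(V)$. The local structural argument underlying Proposition~1.3A -- combining the fiber-bundle-pair trivializations coming from (H2) near $\overline{L}-L$ with the properness of $f|L_1$ -- produces, for $V$ sufficiently small, an identification that makes the restriction
\[
\homology^{q}(f^{-1}V, A) \to \homology^{q}(f^{-1}(m), A|f^{-1}(m))
\]
an isomorphism. This entire structure is preserved under base change along $g|V' : V' \to V$, so the same argument applied over $V'$ yields an isomorphism
\[
\homology^{q}(f'^{-1}V', {g'}^{*}A) \to \homology^{q}(f'^{-1}(m'), {g'}^{*}A|f'^{-1}(m')).
\]
Since $g'$ restricts to a homeomorphism $f'^{-1}(m') \to f^{-1}(m)$ carrying ${g'}^{*}A$ to $A$, passing to the direct limit over shrinking $V$ (and correspondingly $V'$) identifies the right-hand stalk with the same fiber cohomology as the left-hand one, compatibly with the base change morphism. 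The delicate point to check carefully is that this limit over neighborhoods of the form $g^{-1}(V)$ actually computes the stalk at $m'$, which follows because the trivialized structure on $f'^{-1}V'$ makes $\homology^{q}(f'^{-1}V'', {g'}^{*}A) \to \homology^{q}(f'^{-1}(m'), {g'}^{*}A|f'^{-1}(m'))$ an isomorphism for every open $V'' \subset V'$ containing $m'$.
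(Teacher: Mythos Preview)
Your argument is correct and follows essentially the same route as the paper: compute the stalks of both sheaves at $m'\in M'$ and identify each with $\homology^q(f^{-1}m, A|f^{-1}m)$, compatibly with the base-change morphism. The paper packages the ``local structural argument'' you invoke into the single condition (H4) introduced in the proof of Proposition~1.3A, observes that (H4) (rather than (H1)--(H3) separately) is what actually drives the stalk computation, and then simply notes that (H4) is stable under base change---so the conclusion of Proposition~1.3A applies directly to the primed data without any need for (H3) on $M'$. Your verification that (H1) and (H2) pull back, together with your use of (H3) on $M$, amounts to re-deriving (H4) for the primed setup; your final ``delicate point'' about arbitrary $V''\subset V'$ is precisely what (H4) makes immediate once one runs the two-spectral-sequence argument of Proposition~1.3A over $M'$. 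So the content is the same; the paper's phrasing via (H4) is just more economical.
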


\begin{nncor}
%\paragraph*{Corollary 1.3C}\emph{
With notation and assumptions as in Proposition~1.3A above, the
homomorphism $B \otimes \derived ^qf_*A \to 
\derived ^q (A \otimes f^*B)$ is an isomorphism for all
sheaves $B$ on $M$ of the type $j_!j*(\Z _M)$ 
for an open immersion $j:V \to M$. 
\end{nncor}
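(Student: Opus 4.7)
The plan is to reduce the claim to a diagram chase using the short exact sequence
\[ 0 \to j_! \Z_V \to \Z_M \to i_* \Z_{M \setminus V} \to 0 \]
attached to the closed complement $i : M \setminus V \to M$. Write $B = j_! \Z_V$, set $U = f^{-1}(V)$, and denote by $j_L : U \to L$ and $i_L : L \setminus U \to L$ the pulled-back open immersion and its closed complement. The elementary projection formula for open immersions gives $B \otimes \derived^q f_* A = j_! j^* \derived^q f_* A$, while the base-change identity $f^* j_! = j_{L,!} (f|U)^*$ yields $f^* B = j_{L,!} \Z_U$ and hence $A \otimes f^* B = j_{L,!} j_L^* A$. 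Thus the corollary asserts that the natural projection-formula map
\[ j_! j^* \derived^q f_* A \to \derived^q f_*(j_{L,!} j_L^* A) \]
is an isomorphism.

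I would apply this natural transformation to the displayed short exact sequence and stare at the resulting commutative diagram. The top row, obtained from $(-) \otimes \derived^q f_* A$, is the standard exact sequence $0 \to j_! j^* G \to G \to i_* i^* G \to 0$ with $G := \derived^q f_* A$ (exact because $j_! \Z_V$ is flat over $\Z_M$). The bottom row, obtained from $\derived^q f_*(A \otimes f^*(-))$, is a fragment of the long exact sequence attached to $0 \to j_{L,!} j_L^* A \to A \to i_{L,*} i_L^* A \to 0$, namely
\[ \derived^q f_*(j_{L,!} j_L^* A) \to \derived^q f_* A \to \derived^q f_*(i_{L,*} i_L^* A). \]
The middle vertical arrow is the identity on $G$. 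The right vertical arrow is an isomorphism: exactness of $i_{L,*}$ gives $\derived^q f_*(i_{L,*} i_L^* A) = i_* \derived^q (f|L \setminus U)_* (i_L^* A)$, and Corollary~1.3B applied with $g = i$ identifies this with $i_* i^* \derived^q f_* A$.

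The diagram chase is then immediate. The unit $G \to i_* i^* G$ is surjective, hence so is $\derived^q f_* A \to \derived^q f_*(i_{L,*} i_L^* A)$ via the iso right column. Exactness of the long exact sequence therefore kills the incoming boundary $\derived^{q-1} f_*(i_{L,*} i_L^* A) \to \derived^q f_*(j_{L,!} j_L^* A)$, so $\derived^q f_*(j_{L,!} j_L^* A) \to \derived^q f_* A$ is injective with image $\ker(G \to i_* i^* G) = j_! j^* G$. Naturality of the projection-formula map, combined with it being the identity when $B = \Z_M$, identifies this canonical isomorphism as the inverse of the projection-formula map, which is therefore an isomorphism.

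The main obstacle, such as it is, is entirely formal: verifying that the projection-formula natural transformation is compatible with the short exact sequence of $B$'s so that the above diagram genuinely commutes, and confirming that Corollary~1.3B may be invoked for the closed immersion $i$. Both are routine once the identifications $f^* j_! = j_{L,!} (f|U)^*$ and $f^* i_* = i_{L,*} (f|L \setminus U)^*$, and the projection formulas $j_! \Z_V \otimes G = j_! j^* G$ and $i_* \Z_{M \setminus V} \otimes G = i_* i^* G$, are in hand.
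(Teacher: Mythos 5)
Your proposal is correct and follows essentially the same route as the paper: the paper likewise reduces to the short exact sequence relating $j_!\Z_V$, $\Z_M$ and $i_*\Z_F$, notes that $S^q\Z_M$ is tautologically an isomorphism and that $S^qi_*\Z_F$ is one by Corollary~1.3B with $g=i$, and concludes for $S^qj_!\Z_V$ by the resulting long exact sequence. Your write-up merely makes the diagram chase and the identifications ($f^*j_!=j_{L,!}(f|U)^*$, projection formulas for $j_!$ and $i_*$) explicit, which the paper leaves implicit.
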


\begin{proof}[of Proposition~1.3A] 
Let $L_m$ and $\overline{L}_m$ be the 
fibers over $m \in M$ in $L$ and $\overline{L}$ respectively.
 Let $j:L \to \overline{L}$ and $j_m:L_m \to \overline{L}_m$
denote the inclusions. We have the spectral sequences:
\begin{equation*}
\begin{split}
\mathrm{E}_2^{p,q}&=(\derived ^p\overline{f}_* \derived^q j_*A)_m \
\Rightarrow (\derived^{p+q}f_*A)_m \text{ and }\\
\mathrm{E}_2^{p,q}&=\homology^p (\overline{L}_m,
\derived ^q(j_m)_*A|L_m) \Rightarrow \homology^{p+q}    
(L_m,A|L_m)
\end{split}
\end{equation*}
and a homomorphism from the first to the second. The second is
the Leray spectral sequence for the sheaf $A|L_m$ and the
inclusion $j_m$. The first is obtained by taking stalks at $m$ of
the Leray spectral sequence for the composite $L \to \overline{L} \to\nolinebreak M$ 
and the sheaf $A$. By the proper base-change theorem, we see that
the $\mathrm{E}_2^{p,q}$ terms of the first spectral sequence
coincide with $\homology ^p( \overline{L}_m, \derived^q j_*A| \overline{L}_m)$.
To prove the proposition, it suffices to check that
 $\derived^qj_*A|\overline{L}_m \to 
\derived^q(j_m)_*(A|L_m)$
is an isomorphism. This arrow evidently
induces an isomorphism of stalks
at $x \in L_m$. For the rest, one observes
that (H1), (H2) and (H3) imply (H4) below and then notes that for 
$x \in \overline{L}_m -L_m$ the required 
isomorphism on stalks is a consequence. 
In any ``geometric situation''
 where $M$ is triangulable, this is immediate. That (H4)
and our assumptions on the topology of $L$ and $M$ are
adequate to draw the same conclusion is left to the reader.    
 This may be seen, for instance, by using the Kunneth formula. 

\medskip

\noindent (H4): Every $x \in \overline{L}-L$ has a neighbourhood $U\subset
\overline{L}$ with a continuous $h: (U, U\cap L) \to (V,W)$ so
that 
\begin{enumerate}
\item[{\rm (i)}] $\overline{f}(U)$ is open in $M$, 
\item[{\rm (ii)}] $(h,\overline{f}|U)$ induce a homeomorphism $(U, U\cap L) 
\to (V,W) \times \overline{f}(U)$, and 
\item[{\rm (iii)}] there is a sheaf $C$ on $W$ and an isomorphism 
$(h|U \cap L)^*C \to A|U \cap L$. 
\end{enumerate}
\qed\end{proof}
 
%\paragraph{\textbf{Proof of Cor.1.3B:}} 
\begin{proof}[of Corollary~1.3B] Once it is noted that the hypothesis
(H4) in the proof of Proposition~1.3A is stable under base-change,
we see that stalks of both sheaves at $m'\! \in\! M'$ are compatibly 
isomorphic to $\homology^q(f^{-1}m, A|f^{-1}m)$ where $g(m')=m$
(this argument is standard, see e.g., 
the proof of the operation $\derived^qf_!$  being stable
under base change in \cite[page~49]{nori:SGAfourhalf}).  
\qed\end{proof}

%\paragraph{\textbf{Proof of Cor.1.3C:}} 
\begin{proof}[of Corollary~1.3C] Denote by $S^qB$ the natural
 homomorphism $B \otimes \derived ^qf_*A \to 
\derived ^q (A \otimes f^*B)$.  
 Let $i:F \to M$ be the the inclusion
of the complement of the given open subset $V$. Now $S^q\Z_M$
is tautologically an isomorphism. Putting $g=i$ in Corollary~1.3B,
we see that $S^qi_*\Z_F$ is an
isomorphism. That $S^qj_!\Z_V$ is an isomorphism follows from
the long exact sequence of $S^q$ obtained from 
the short exact sequence: 
$$
\xymatrix@1{0 \ar[r] & i_*\Z_F \ar[r] & \Z_M \ar[r] & j_!\Z_V
\ar[r] & 0.
}
$$
\qed\end{proof}

\setcounter{thm}{3}
\begin{remark}
Consider a weakly constructible
sheaf $F$ on $\A ^1$ whose restriction to the complement of a non-empty
finite subset $S\subset \C$ is locally constant. Let $T$ be a tree embedded in 
$\C$ with $S$ as its set of vertices, and $E$ as its
set of edges. Let $b(e)$ denote the barycenter
 of an edge $e \in E$.  One checks easily that
\begin{enumerate}
\item[(a)] $\homology^q(\A ^1,F)\to \homology^q(T,F|T)$ is an isomorphism,
and 
\item[(b)] the open covering $\{ st(v): v \in S\}$  
 of $T$
give a Leray covering for $F|T$ (see e.g., 
\cite[Chapter~VIII,~Prop.~8.1.4.(ii),~page~323]{nori:KS}), 
for a more general statement). 
\end{enumerate}
The Cech complex for this covering is the arrow  
$ \oplus_{s \in S}F_s \to \oplus_{e \in E}F_{b(e)}$. Thus,  
 if $F|S=0$, then $ \oplus_{e \in E}F_{b(e)} \to\homology^1(T,F|T)$
 is an isomorphism. This gives part (3) of the basic lemma for the affine line.
\end{remark}

\begin{remark}
Let $\pi: X \times \A^1 \to X$ denote
the projection. Let $V\subset X \times A^1$
be a Zariski closed subset so that $\pi|V$ is a finite 
surjective morphism. Let $A$ be a sheaf on $X \times \A^1$ so 
that $A|V=0$ and the restriction of $A$ to the complement
$V$ is locally constant. We sketch briefly a proof that 
$\derived^1\pi_*A$ is weakly constructible.

If $\pi |V$ is a finite etale morphism, then $(X \times \A^1,V)
\to X$ is a fiber bundle pair. Consequently 
$\derived^1\pi_*A$ is a locally constant sheaf. 
For the general case,
express  
$X$ as a finite disjoint union of Zariski locally closed smooth
subvarieties $X_i$ so that $(\pi ^{-1}X_i \cap V)_{red} \to X_i$ 
is a finite etale morphism. By Corollary~1.3B, we see
that $\derived ^1 \pi _* A|X_i$ maps isomorphically 
to the locally constant sheaf 
$\derived ^1 (\pi |X_i \times \A^1 )_*A|X_i \times \A^1)$. 
This proves the weak constructibility of $\derived^1\pi_*A$.
\end{remark}

\begin{prop}Let $F$ be a 
constructible sheaf on a variety $X$. Then $\homology
^q(X,F)$ is an object of $\mathcal{N}$ for all $q \geq 0$.
\end{prop}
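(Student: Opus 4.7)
The plan is to prove the proposition by induction on $\dim X$, reducing first to the affine case and then applying the basic lemma for sheaves directly. The two steps are (a) reducing to affine varieties via a Mayer--Vietoris argument, and (b) handling the affine case by induction on dimension using the basic lemma.

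For step (a), given a general (separated) variety $X$, I choose a finite cover $X = X_1 \cup \cdots \cup X_r$ by affine opens; separatedness guarantees that all multi-intersections are again affine. Writing $U = X_r$ and $V = X_1 \cup \cdots \cup X_{r-1}$, the Mayer--Vietoris long exact sequence expresses $\homology^q(X,F)$ in terms of $\homology^q(U,F)$, $\homology^q(V,F)$, and $\homology^q(U \cap V,F)$. Since $U$ is affine and both $V$ and $U \cap V$ are unions of strictly fewer affine opens, induction on $r$ reduces the problem to the affine case.

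For step (b), with $X$ affine, I induct on $n = \dim X$. The base case $n = 0$ is immediate, since $\homology^0(X,F) = \bigoplus_{x \in X} F_x$ is a finite direct sum of stalks, each in $\mathcal{N}$. For $n \geq 1$, apply the basic lemma for sheaves to obtain a Zariski open $U \subset X$ with complement $Y$ of strictly smaller dimension, such that $F' := j_! j^* F$ has cohomology concentrated in degree $n$, with $\homology^n(X, F') \cong \bigoplus_{x \in E} F_x$ for some finite $E \subset U(\C)$; in particular $\homology^q(X, F') \in \mathcal{N}$ for every $q$. The short exact sequence $0 \to F' \to F \to i_* i^* F \to 0$ (for $i : Y \hookrightarrow X$) then produces a long exact cohomology sequence. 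The closed subvariety $Y$ of the affine $X$ is itself affine with $\dim Y < n$, and $i^* F$ is constructible, so the induction hypothesis gives $\homology^q(Y, i^* F) = \homology^q(X, i_* i^* F) \in \mathcal{N}$. Consequently $\homology^q(X, F)$ sits as an extension of a subobject of $\homology^q(Y, i^* F)$ by a quotient of $\homology^q(X, F')$, and so also lies in $\mathcal{N}$.

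The main subtlety in both steps is the use of closure of $\mathcal{N}$ under extensions in the ambient category of $R$-modules: cohomology groups repeatedly appear as the middle term in a short exact sequence whose outer terms are already known to lie in $\mathcal{N}$. This is a Serre-subcategory condition that is implicitly part of the paper's standing hypothesis on $\mathcal{N}$ (and which clearly holds in the principal examples, such as finitely generated or Noetherian modules). Granted this, the proof is essentially bookkeeping with long exact sequences and requires no geometric input beyond the basic lemma itself.
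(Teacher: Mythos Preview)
Your proof is correct and follows essentially the same strategy as the paper: the affine case by induction on dimension via part~(3) of the basic lemma and the short exact sequence $0 \to j_!j^*F \to F \to i_*i^*F \to 0$, and the general case by induction on the number of affine opens using Mayer--Vietoris. Your observation that the Mayer--Vietoris step tacitly uses closure of $\mathcal{N}$ under extensions is accurate and is indeed an implicit assumption in the paper's standing hypotheses.
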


\begin{proof} Part (3) of the second form of the basic
lemma, combined with induction on dimension, proves the 
proposition when $X$ is affine. The general case now
follows  by induction on the number $n$ of affine open sets
 required to cover $X$, and the Mayer-Vietoris sequence (the
 same technique is used in the proof of Proposition~3.6).
\qed\end{proof}

\section{The cohomology of affine space} 

This section begins with the proof of Theorem~1
as stated in the introduction. The conventions and notation
are as in the previous section. The proof is
by induction on $n$, the inductive step relying crucially 
 on a  canonical proof for the case 
$n=1$ that works with parameters (Proposition~2.2 below). We
then deduce the effaceability of cohomology for affine varieties
(Corollary~2.4).

We recall the statement of the theorem.

\renewcommand{\thethm}{1}

\begin{thm} Every constructible  $F$ on affine space
is a subsheaf of a constructible $G$ so that
$\homology^q(\A^n,G)=0$ for all $q>0 $.
\end{thm}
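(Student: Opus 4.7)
The plan is to proceed by induction on $n$. The case $n=0$ is vacuous. For the inductive step, write $\A^n=\A^{n-1}\times\A^1$ and let $\pi:\A^n\to\A^{n-1}$ be the projection onto the first factor. The key input is a parametric form of the case $n=1$, to be supplied by Proposition~2.2 below: for every constructible $F$ on $X\times\A^1$, there is an embedding $F\hookrightarrow G_1$ into a constructible sheaf with $\derived^q\pi_*G_1=0$ for all $q>0$ and with $\pi_*G_1$ constructible on $X$.

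Granted this, the induction runs as follows. Apply Proposition~2.2 with $X=\A^{n-1}$ to produce $F\hookrightarrow G_1$; set $F_1=\pi_*G_1$, a constructible sheaf on $\A^{n-1}$. By the induction hypothesis, embed $F_1\hookrightarrow H$ with $H$ constructible and $\homology^q(\A^{n-1},H)=0$ for all $q>0$. Form the pushout
$$G \;=\; G_1\oplus_{\pi^{-1}F_1}\pi^{-1}H,$$
where $\pi^{-1}F_1\to G_1$ is the counit of the adjunction $\pi^{-1}\dashv\pi_*$ and $\pi^{-1}F_1\hookrightarrow\pi^{-1}H$ is the pullback of $F_1\hookrightarrow H$ (injective since $\pi^{-1}$ is exact). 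This $G$ sits in a short exact sequence
$$0\to\pi^{-1}F_1\to G_1\oplus\pi^{-1}H\to G\to 0,$$
and injectivity of the component $\pi^{-1}F_1\hookrightarrow\pi^{-1}H$ forces $G_1\hookrightarrow G$ to be injective, hence $F\hookrightarrow G$. Since $\pi$ is a trivial $\A^1$-bundle with contractible fibers, the projection formula (or a direct computation) gives $\pi_*\pi^{-1}\cong\mathrm{id}$ and $\derived^q\pi_*\pi^{-1}=0$ for $q>0$. Applying $\pi_*$ to the short exact sequence then yields $\pi_*G\cong H$ and $\derived^q\pi_*G=0$ for all $q>0$, so the Leray spectral sequence gives $\homology^q(\A^n,G)=\homology^q(\A^{n-1},H)=0$ for $q>0$, completing the induction. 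Constructibility of $G$ follows from that of its constituents.

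The main obstacle is therefore Proposition~2.2 itself. For a single $F$ on $\A^1$, Artin vanishing already forces $\homology^q(\A^1,F)=0$ for $q\geq 2$, so only $\homology^1(\A^1,F)$ must be killed. By Remark~1.4, this group is the cokernel of the \v{C}ech map
$$\bigoplus_{s\in S}F_s\;\longrightarrow\;\bigoplus_{e\in E}F_{b(e)}$$
associated to a tree $T\subset\C$ whose vertex set $S$ contains the locus where $F$ fails to be locally constant. The natural strategy for constructing $G$ is to enlarge the stalks $F_s$ at the vertices by adjoining skyscraper summands drawn from the generic (nearby) stalks, in such a way that the \v{C}ech differential becomes surjective; the enlarged sheaf will contain $F$ and have vanishing $\homology^1$. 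The hard part will be to perform this construction canonically and functorially in a family parameterized by a base $X$: the singular loci move with $x\in X$, so the combinatorial tree must be replaced by a structure that varies algebraically, and one must verify both that the resulting sheaf on $X\times\A^1$ is constructible and that its $\pi_*$ remains constructible on $X$. This parametric canonicity is the technical heart of the argument.
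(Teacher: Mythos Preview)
Your inductive skeleton and the pushout argument are correct and match the paper almost verbatim: your $G_1$ is the paper's $H$, your $H$ is the paper's $J$, and your long-exact-sequence computation of $\derived^q\pi_*G$ is equivalent to the paper's ``$\pi$-isomorphism'' argument. The differences lie entirely in how the key lemma (your Proposition~2.2) is formulated and proved.

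First, the paper does \emph{not} fix $\pi$ in advance. Given $F$ on $\A^n$, one first chooses a nonconstant $f$ with $F|\mathrm{D}(f)$ locally constant, and then changes linear coordinates so that $\pi|\V(f)$ is \emph{finite} over $\A^{n-1}$ (Noether normalisation). This adaptation is essential: for a fixed projection the singular locus of $F$ need not be finite over the base, so your picture of ``a varying finite set of bad points in each fibre'' can fail outright, and your heuristic construction has nothing to act on. Your version of Proposition~2.2 (for arbitrary constructible $F$ and fixed $\pi$) is still true, but proving it would require a further Noetherian induction on $X$ that you do not mention; the paper sidesteps this by choosing $\pi$ after $F$.

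Second, the paper's Proposition~2.2 carries the extra hypothesis that $A|V=0$ with $A$ locally constant off $V$ (and $\pi|V$ finite surjective), and its conclusion is stronger: $\derived^q\pi_*C=0$ for \emph{all} $q\ge 0$. The bridge from this special $A=j_!j^*F$ to the full $F$ is one more pushout (along $j_!j^*F\hookrightarrow F$), which produces your $G_1$. The construction of $C$ is not by adjoining skyscrapers at the bad points. Instead one passes to $\A^2_X$: set $B=\ker(p_1^*A\to\Delta_*A)$ and $C=\derived^1(p_2)_*B$. The vanishing $\derived^q(p_1)_*B=0$ for all $q$ is immediate, whence $\derived^q\Pi_*B=0$; since $p_2|(\Delta\cup p_1^{-1}V)$ is finite, Proposition~1.3A gives $\derived^q(p_2)_*B=0$ for $q\neq 1$, and Leray for $\pi\circ p_2$ then yields $\derived^q\pi_*C=0$ for all $q$. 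This global construction on $\A^2_X$ is what supplies the ``canonicity in families'' you correctly identify as the crux.
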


\begin{proof} We begin as in the proof of the basic lemma.
Given a constructible sheaf $F$
on $\A^n$, choose a nonconstant $f$ in the co-ordinate
ring of $\A^n$ so that the restriction $F|\mathrm{D}(f)$ is
locally constant, where $\mathrm{D}(f)$ and $\V(f)$, are, as before,
$\{ x|\,f(x)\neq 0\}$ and $\{x|\,f(x)=0\}$ in $\A^n$. 
The projection $\pi:\A^n\to \A^{n-1}$ is, once again, arranged
so that $\pi|\V(f)$ is a finite (and surjective) morphism. 
Denote by 
$j:\mathrm{D}(f)\to \A^n$ and $i:\V(f)\to \A^n$ the inclusions.

From Proposition~2.2 below (with $X=\A^{n-1}$ and $A=j_!j^*F$ )
we see that $j_!j^*F$ is a subsheaf of 
a constructible $H'$
on $\A^n$ so that $\derived^q\pi_*H'=0$ for all $q\geq 0$.
Taking push-outs, we obtain the commutative diagram
of exact sequences below:
$$
\xymatrix{       
0 \ar[r]\ar[d] &  j_!j^*F \ar[r]\ar[d]^-{\text{mono}}  &     
F \ar[r]\ar[d]^-{\text{mono}}  & i_*i^*F \ar[r]\ar[d]^-{\text{iso}} & 0 \\
0 \ar[r] &  H'  \ar[r] & H    \ar[r]  &H''  \ar[r] & 0. \\
}   
$$
We will say that a homorphism $P\to Q$ of sheaves on $\A^n$
is a \emph{$\pi$-isomorphism} if  
$\derived^q\pi_*P \to \derived^q\pi_*Q$
is an isomorphism for all $q\geq 0$. We shall see that all arrows
in the commutative diagram below are $\pi$-isomorphisms.
$$
\xymatrix{
\pi^*\pi_*H \ar[r]\ar[d]  & H\ar[d] \\
\pi^*\pi_*H'' \ar[r] &  H''.     \\
}
$$
We have the implications:
\begin{equation*}
\begin{split}
\derived^q\pi_*H'& =0 \mbox{ for all } q\geq 0\\
&\Rightarrow H\to H'' \mbox{ is a } \pi\mbox{-isomorphism}\\
&\Rightarrow \pi_*H \to \pi_*H'' \mbox{ is an isomorphism}\\
&\Rightarrow \pi^*\pi_*H \to \pi^*\pi_*H'' \mbox{ is an isomorphism.}
\end{split}
\end{equation*}
Next, because $\pi |\mathrm{V}(f)$ is a finite morphism, we
see that $H''$ is constructible and that 
$\derived ^q \pi_*H''$ vanishes for $q>0$. For any sheaf $K$ 
on $\A ^{n-1}$, $\derived ^q \pi_*\pi^*K$ vanishes for $q>0$.
Putting $K=\nolinebreak \pi_*H''$, 
we deduce that $\pi^*\pi_*H'' \to H''$ is a $\pi$-isomorphism.
We conclude that the only remaining
arrow in the square $\pi^*\pi_*H \to H$ is 
itself a $\pi$-isomorphism. 

Now let  
$ \pi_*H\subset J$ with $J$ a sheaf on $\A^{n-1}$.
Taking push-outs once again yields the commutative diagram
of exact sequences below:
$$
\xymatrix{
0 \ar[r]\ar[d] & \pi^*\pi_*H \ar[r]\ar[d] & \pi^*J \ar[r]\ar[d] &
M \ar[r]\ar[d]^-{\text{id.}} & 0 \\
0  \ar[r] &  H \ar[r] & G \ar[r] & M \ar[r] & 0. \\
}
$$
The vertical arrows on the sides being $\pi$-isomorphisms, 
it follows
that $\pi^*J \to G$ is a $\pi$-isomorphism as well. It 
follows that 
$$\xymatrix@1{
\homology^q(\A^{n-1},J) \ar[r] & \homology^q(\A^n,\pi^*J) 
\ar[r] &  \homology^q(\A^n, G)}
$$ 
are isomorphisms, for
all $q\geq 0$. By the induction hypothesis, we may choose $J$  
satisfying $\homology^q(\A^{n-1},J)=0$
for all $q>0$ with $J$ constructible.  We now have  
 $F \subset H \subset G$  with $G$ constructible  
and $\homology^q(\A^n, G)=0$ for all $q>0$. 
The proof of the theorem is now complete 
modulo Proposition~2.2.
\qed\end{proof}

\renewcommand{\thethm}{2.1}
\begin{notation}
 We fix a variety $X$ and work in the category of $X$-schemes.
The product $\A^r \times X$ is denoted by $\A^r_X$.  
Let $p_i:\A^2_X \to \A^1_X\,$, $\pi:\A^1_X \to X\,$ and 
$\Pi:\A^2_X \to X$ denote the given projections, and let
 $\Delta :\A^1_X\to \A^2_X$ denote
the diagonal morphism. 
\end{notation}

For any sheaf $A$ on $\A^1_X$, we define $B$ on $ \A^2_X  $
by the exact sequence:  
$$\xymatrix@1{
  0\ar[r] & B \ar[r] & p_1^*A \ar[r] & \Delta_*A \ar[r] & 0.
}
$$
Applying $\derived ^q(p_2)_*$ 
to the above exact sequence gives the arrow: 
$$
\delta : A= (p_2)_* \Delta_*A   \to C=\derived ^1(p_2)_*B.
$$

\renewcommand{\thethm}{2.2}

\begin{prop}Let $V\subset \A^1_X$
be a Zariski closed subset so that $\pi|V$ is a finite 
surjective morphism. Let $A$ be a sheaf on $\A^1_X$ so 
that $A|V=0$ and the restriction of $A$ to the complement
$\A^1_X-V $ is locally constant. With  
$\delta : A \to C$ as above, we have:
\begin{enumerate}
 \item[{\rm (1)}] the above $\delta: A \to C$ is a monomorphism,
\item [{\rm (2)}] $\derived ^q\pi_*C=0$ for all $q\geq 0$,
\item [{\rm (3)}] $C$ is weakly constructible. If $A$ is 
constructible, so is $C$. 
\end{enumerate}
\end{prop}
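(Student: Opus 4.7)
The plan is to apply $\derived (p_2)_*$ to the defining short exact sequence, extract (1) from the resulting long exact sequence, then deduce (2) by playing the two factorizations $\Pi = \pi \circ p_2 = \pi \circ p_1$ off each other, and finally read off (3). For Part (1): since $p_2\circ\Delta = \mathrm{id}$, $\derived (p_2)_*\Delta_*A = A$ concentrated in degree zero. For $\derived^q(p_2)_*p_1^*A$ I would apply VPBC (Proposition~1.3A) to $p_2$ with compactification $\overline{L} = \mathbb{P}^1\times\A^1_X$ along the $t_1$-direction and closed subset $L_1 = p_1^{-1}(V)$: the restriction $p_2|_{L_1}$ is finite (it is the base change by $\A^1$ of the finite morphism $\pi|V$), $p_1^*A$ is locally constant on $p_1^{-1}(\A^1_X-V)$, and hypotheses (H2) and (H3) are routine. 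The stalk at $(t_2,x)\in\A^1_X$ is therefore $\homology^q(\pi^{-1}(x), A|_{\pi^{-1}(x)})$. The hypothesis $A|V=0$ together with the surjectivity of $\pi|V$ forces any global section to vanish in a neighbourhood of the non-empty finite set $V\cap\pi^{-1}(x)$, hence everywhere by local constancy on the connected complement, so $\homology^0=0$; and M. Artin's vanishing on affine curves gives $\homology^q=0$ for $q\geq 2$. Thus $\derived (p_2)_*p_1^*A$ is concentrated in degree one, and the long exact sequence yields $0\to A\xrightarrow{\delta} C\to \derived^1(p_2)_*p_1^*A\to 0$, proving (1) and also giving $\derived (p_2)_*B \simeq C[-1]$.

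For Part (2), composition gives $\derived \pi_* \derived (p_2)_* B = \derived \Pi_* B$, hence $\derived \pi_* C \simeq (\derived \Pi_* B)[1]$, so it suffices to show $\derived \Pi_* B = 0$. Apply $\derived \Pi_*$ to the defining triangle of $B$. Using $\Pi = \pi\circ p_1$ together with the contractibility of the $\A^1$-fibres of $p_1$, one has $\derived (p_1)_* \Z_{\A^2_X} = \Z_{\A^1_X}$, hence $\derived (p_1)_* p_1^* A = A$ by the projection formula, so $\derived \Pi_* p_1^* A = \derived \pi_* A$. Using $\Pi\circ\Delta = \pi$, one has $\derived \Pi_*\Delta_*A = \derived \pi_*A$. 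It remains to check that the resulting map $\derived \pi_*A \to \derived \pi_*A$ (induced by $p_1^*A\to\Delta_*A$) is the identity. On stalks at $x\in X$, both sides become $\derived\Gamma(\A^1,A|_{\pi^{-1}(x)})$ (by base change to the fibre $\Pi^{-1}(x)=\A^2$ and contractibility of the $p_1$-direction), and under these identifications the map sends a class $q_1^*\alpha$ to $\Delta^*q_1^*\alpha = (q_1\circ\Delta)^*\alpha = \alpha$. This is the single nontrivial diagram chase.

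For Part (3), the short exact sequence of (1) reduces weak constructibility of $C$ to that of $A$ (hypothesis) and of $\derived^1(p_2)_*p_1^*A$. The latter follows from Remark~1.5 applied to $p_2$, with $(X,V,A)$ there replaced by $(\A^1_X, p_1^{-1}(V), p_1^*A)$, using that $p_2|_{p_1^{-1}(V)}$ is finite and surjective over $\A^1_X$. When $A$ is constructible, Remark~1.4 applied fibrewise identifies each stalk of $\derived^1(p_2)_*p_1^*A$ with a finite direct sum of stalks of $A|_{\pi^{-1}(x)}$, hence in $\mathcal{N}$; extension-closure of $\mathcal{N}$ then places each stalk of $C$ in $\mathcal{N}$. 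The main obstacle throughout is the identification in Part (2) of the induced map $\derived \pi_*A \to \derived \pi_*A$ as the identity; once granted, everything else is routine bookkeeping of VPBC hypotheses, the projection formula, and Remarks~1.4 and~1.5.
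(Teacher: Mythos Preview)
Your argument is correct and uses the same ingredients as the paper (VPBC for $p_2$, the two Leray factorizations $\Pi=\pi\circ p_1=\pi\circ p_2$, and Remarks~1.4--1.5), but the paper organizes them so as to eliminate precisely the step you flag as ``the main obstacle.'' Instead of applying VPBC to $p_1^*A$ and then chasing the induced map on $\derived\Pi_*$, the paper applies $\derived(p_1)_*$ to the defining sequence $0\to B\to p_1^*A\to\Delta_*A\to 0$ first. Since $p_1\circ\Delta=\mathrm{id}$, both $\derived(p_1)_*p_1^*A$ and $\derived(p_1)_*\Delta_*A$ are $A$ in degree zero, and the map between them is visibly the identity (restriction of a section constant along $p_1$-fibres to the diagonal section); hence $\derived^q(p_1)_*B=0$ for all $q$, and $\derived\Pi_*B=0$ follows from Leray for $\pi\circ p_1$ with no diagram chase. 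The paper then applies VPBC to $B$ itself (with $L_1=\Delta(\A^1_X)\cup p_1^{-1}V$, where $B$ vanishes) rather than to $p_1^*A$; this yields $\derived^q(p_2)_*B=0$ for $q\neq 1$, and Leray for $\pi\circ p_2$ gives~(2). A side benefit of this choice is that Part~(3) becomes cleaner: the stalks of $C=\derived^1(p_2)_*B$ are, via VPBC and Remark~1.4, finite direct sums of stalks of $B$ (hence of $A$), so they lie in $\mathcal{N}$ directly, without your appeal to extension-closure of $\mathcal{N}$ (a hypothesis the paper does not explicitly list). Your route has the virtue of producing the short exact sequence $0\to A\to C\to\derived^1(p_2)_*p_1^*A\to 0$ along the way, which the paper records separately as Remark~2.3.
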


\begin{proof} Applying
$\derived ^q(p_1)*$ to the short exact sequence that defines
$B$, we see that $\derived ^q(p_1)_*B=0$ for all $q$. From the
Leray spectral sequence applied to $\Pi=\pi\circ p_1$ we see that
$\derived ^q\Pi_*B=0$ for all $q$.

Put $Y=\Delta (\A ^1_X) \cup p_1^{-1}V$. Then $B|Y=0$ and
$B$ is locally constant on the complement of $Y$. Now 
 $p_2|Y$ is a finite surjective morphism. So, substituting 
$(\A^2_X,\A^1_X,p_2,B)$ for $(L,M,f,A)$ in Proposition~1.3A,
we get $\derived ^q(p_2)_*B=0$
for $q \neq 1$. By the Leray spectral sequence for
$\Pi=\pi\circ p_2$ we deduce that   
$\derived ^q\pi_*C=\derived ^{q+1}\Pi _*B=0$. This proves part (2)
of the proposition. That $C$ is weakly constructible 
follows from Remark~1.5. By Proposition~1.3A and Remark~1.4,
 any stalk of $C$, being isomorphic to
 a finite direct sum of stalks of $B$,
 is indeed an object of $\mathcal{N}$ if $A$ is constructible.
This proves part (3). Finally, 
$ \pi^*\derived ^q\pi_*A \to \derived^q(p_2)_*p_1^*A$ is an
isomorphism by Corollary~1.3B, because $V \to X$ is a 
finite morphism. Furthermore, $V \to X$ being surjective, 
the stalks of these sheaves vanish for $q=0$, and this proves
part (1) of the proposition. 
\qed\end{proof} 

\renewcommand{\thethm}{2.3}

\begin{remark}
%\paragraph{\textbf{Remark~2.3.}} 
From the above proof, we see that
$A \mapsto C$ is, in fact, an exact functor for $A$ as
in the proposition. Furthermore $\derived^qp_2*$ gives the
short exact sequence:
$$\xymatrix@1{
0 \ar[r] & A \ar[r] & C \ar[r] & \pi^*\derived ^1\pi_*A \ar[r] & 0.
}
$$
\end{remark}

\renewcommand{\thethm}{2.4}

\begin{cor} Every constructible
 sheaf $F$ on an affine variety $X$
is a subsheaf of a constructible 
sheaf $G$ on $X$ so that the induced homomorphism
$\homology^q(X,F)\to \homology^q(X,G)$ is zero for all $q>0$.
\end{cor}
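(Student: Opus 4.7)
The plan is to reduce Corollary~2.4 to Theorem~1 by means of a closed embedding. Since $X$ is affine, I would choose a closed embedding $i:X\hookrightarrow \A^N$ defined over $k$ for some $N$. I would first verify that $i_*F$ is constructible on $\A^N$: the stratification of $X$ adapted to $F$, extended by the open stratum $\A^N-X$ on which $i_*F$ vanishes, witnesses weak constructibility, while every stalk of $i_*F$ is either a stalk of $F$ (an object of $\mathcal{N}$) or zero.

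Next I would apply Theorem~1 to $i_*F$ to obtain a monomorphism $\iota: i_*F\hookrightarrow H$ with $H$ constructible on $\A^N$ and $\homology^q(\A^N,H)=0$ for all $q>0$, and then set $G:=i^*H$. Because $i^*$ is exact and $i^*i_*=\mathrm{id}$ for a closed immersion, applying $i^*$ to $\iota$ produces a monomorphism $F\cong i^*i_*F\hookrightarrow i^*H=G$; constructibility of $G$ follows from restricting a stratification adapted to $H$ to $X$ and from the identification of stalks $G_x=H_{i(x)}$.

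The remaining task is to verify that $\homology^q(X,F)\to\homology^q(X,G)$ is zero for $q>0$. Using $\derived^q i_*=0$ for $q>0$ (standard for closed immersions), the Leray spectral sequence gives a natural isomorphism $\homology^q(X,-)\cong\homology^q(\A^N,i_*(-))$. Under this identification, the map in question becomes $\homology^q(\A^N,i_*F)\to\homology^q(\A^N,i_*i^*H)$, and by naturality of the adjunction unit $\eta:\mathrm{id}\to i_*i^*$ it equals the map induced by the composition $i_*F\xrightarrow{\iota}H\xrightarrow{\eta_H}i_*i^*H$. Since $\homology^q(\A^N,H)=0$ for $q>0$, this composition vanishes on cohomology. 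I do not expect any serious obstacle here: the substantive input is Theorem~1, and the rest is routine unpacking of the adjunction $i^*\dashv i_*$ for a closed immersion.
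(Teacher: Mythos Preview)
Your proposal is correct and follows essentially the same approach as the paper: embed $X$ in affine space via a closed immersion $i$, apply Theorem~1 to $i_*F$ to obtain $T$ (your $H$), and take $G=i^*T$. The paper compresses your adjunction argument into the single commutative square
\[
\xymatrix{
\homology^q(\A^n,i_*F) \ar[r]\ar[d] & \homology^q(\A^n,T)\ar[d]\\
\homology^q(X,i^*i_*F) \ar[r] & \homology^q(X,i^*T),
}
\]
noting that the top-right group vanishes and the left vertical arrow is an isomorphism, which is exactly the content of your unit/Leray discussion.
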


\begin{proof} Let $i:X \to \A^n$ be a closed immersion.
From Theorem~1, we have 
$i_*F\subset T$ with $T$ constructible so that $\homology^q(\A^n,T)
=0$ for all $q>0$. The commutative diagram:
$$
\xymatrix{  
\homology^q(\A^n,i_*F) \ar[r]\ar[d] & \homology^q(\A^n,T)\ar[d]\\  
\homology^q(X,i^*i_*F) \ar[r] & \homology^q(X,i^*T).
} 
$$
shows that $G=i^*T$ is the desired sheaf.
\qed\end{proof}

\section{Ext and higher direct images}

\paragraph{\textbf{Definition and notation 3.1:}}Admissibility, 
$F[U]$ and $R_X$.

An object $F$ of $Sh(X)$ is admissible if the functor $\Ext^q_{Sh(X)}(F,\cdot))
|C(X)$ is effaceable for every $q>0$.

Let $F$ be a sheaf on $X$ and let $U$ be Zariski open in $X$.
If $j:U \to X$ denotes the inclusion, then $F[U]=j_!j^*F$. 
 
$R_X$ always denotes the constant sheaf on $X$ with all stalks 
equal to $R$.

\renewcommand{\thethm}{3.2}

%%%\paragraph{\textbf{Lemma~3.2.}}
\begin{lem} Let 
$$
\xymatrix@1{
0\ar[r] & F'\ar[r] & F \ar[r] & F'' \ar[r] & 0
}
$$ 
be an exact sequence in $Sh(X)$. 
\begin{enumerate}
\item[{\rm (a)}] Assume $F''$ is admissible
and at least one of  {$F', F$} is admissible. Then all three are
admissible. 
\item[{\rm (b)}] Assume $F$ and $F'$ are admissible, and also 
that  
$$
\coker ( \Hom_{Sh(X)}(F,\cdot) \to \Hom_{Sh(X)}(F',\cdot))|C(X) 
$$ 
is effaceable. Then  $F''$ is admissible.
\end{enumerate}
\end{lem}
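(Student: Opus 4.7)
The plan is to exploit the long exact sequence
\begin{align*}
\cdots &\to \Ext^{q-1}(F',K) \to \Ext^q(F'',K) \to \Ext^q(F,K) \\
&\to \Ext^q(F',K) \to \Ext^{q+1}(F'',K) \to \cdots
\end{align*}
of $\Ext$-groups in $Sh(X)$ attached to the given short exact sequence, together with the trivial observation that effacing monomorphisms in $C(X)$ compose: if $K \hookrightarrow L$ in $C(X)$ effaces an additive functor $T$ on $C(X)$ at $K$, then any further mono $L \hookrightarrow L'$ in $C(X)$ yields a composite $K \hookrightarrow L'$ that still effaces $T$ at $K$, since the induced $T(K) \to T(L')$ factors through the zero map $T(K) \to T(L)$. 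Hence several functors can be effaced simultaneously by nesting monomorphisms, and the only issue is which $\Ext$-terms to efface and in what order.

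For part (a), fix $q \geq 1$ and $K$ in $C(X)$. When $F$ and $F''$ are admissible, the three-term piece $\Ext^q(F,K) \to \Ext^q(F',K) \to \Ext^{q+1}(F'',K)$ sandwiches the target. First use admissibility of $F''$ to pick $K \hookrightarrow L_1$ in $C(X)$ effacing $\Ext^{q+1}(F'',\cdot)$, then admissibility of $F$ to pick $L_1 \hookrightarrow L_2$ effacing $\Ext^q(F,\cdot)$. Then $\Ext^q(F',K) \to \Ext^q(F',L_2)$ is zero: given $\xi$ on the left, its image $\xi_{L_1}$ has zero image in $\Ext^{q+1}(F'',L_1)$ and so lifts to some $\eta \in \Ext^q(F,L_1)$; then $\eta_{L_2} = 0$, hence $\xi_{L_2} = 0$. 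The case of effacing $\Ext^q(F,\cdot)$ when $F'$ and $F''$ are admissible is symmetric, using the sandwich $\Ext^q(F'',K) \to \Ext^q(F,K) \to \Ext^q(F',K)$ with $\Ext^q(F',\cdot)$ effaced first and $\Ext^q(F'',\cdot)$ second.

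For part (b), with $F$ and $F'$ admissible, the same device handles $\Ext^q(F'',\cdot)$ for $q \geq 2$ via the sandwich $\Ext^{q-1}(F',K) \to \Ext^q(F'',K) \to \Ext^q(F,K)$: efface $\Ext^q(F,\cdot)$ first (admissibility of $F$), then $\Ext^{q-1}(F',\cdot)$ (admissibility of $F'$ applies since $q-1 \geq 1$). The novel case is $q = 1$, where the long exact sequence exhibits $\Ext^1(F'',K)$ as an extension of a subgroup of $\Ext^1(F,K)$ by the cokernel of $\Hom_{Sh(X)}(F,K) \to \Hom_{Sh(X)}(F',K)$. First use admissibility of $F$ to choose $K \hookrightarrow L_1$ effacing $\Ext^1(F,\cdot)$; for any $\xi \in \Ext^1(F'',K)$ the image $\xi_{L_1}$ maps to zero in $\Ext^1(F,L_1)$, so $\xi_{L_1} = \partial_{L_1}(\phi)$ for some $\phi \in \Hom_{Sh(X)}(F',L_1)$. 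Then use the new hypothesis to choose $L_1 \hookrightarrow L_2$ effacing the cokernel functor at $L_1$; the class of $\phi$ vanishes in the cokernel at $L_2$, so $\phi_{L_2} \in \Hom_{Sh(X)}(F',L_2)$ lifts to $\Hom_{Sh(X)}(F,L_2)$, whence $\xi_{L_2} = \partial_{L_2}(\phi_{L_2}) = 0$.

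The argument is entirely formal once the long exact sequence is in hand. The only delicate bookkeeping is to efface the terms in the correct order along each sandwich: always kill the \emph{downstream} term first so that the target can be lifted to the \emph{upstream} term, and only then efface that upstream term. The cokernel-effaceability hypothesis in (b) is exactly what lets this scheme extend through the boundary case $q=1$, where the relevant downstream term is not an $\Ext^0$ but the cokernel that appears in place of it.
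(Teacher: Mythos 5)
Your proof is correct and takes essentially the same route as the paper: the contravariant long exact sequence of $\Ext$ applied to constructible coefficients, with effacing monomorphisms composed in the order ``downstream term first, then the lift's home,'' and the cokernel hypothesis supplying the $q=1$ boundary case in (b). The paper's proof is simply a terser version of yours (it writes out part (b) and dismisses part (a) as following ``in the same manner'').
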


\begin{proof} Consider the long exact sequence of Ext:
$$
\xymatrix@-1pc{
\Ext^{q-1}_{Sh(X)}(F',\cdot) \ar[r] &
\Ext^q_{Sh(X)}(F'',\cdot) \ar[r] &  \Ext^q_{Sh(X)}(F,\cdot)    
\ar[r] & \Ext^q_{Sh(X)}(F',\cdot). 
}
$$
Let $P$ be a constructible sheaf on $X$ and let $q>0$. For
part (b), there is a monomorphism $P \to Q$ with $Q$ constructible
that effaces $\Ext^q_{Sh(X)}(F,P)$. Next choose a monomorphism
$Q \to S$ with $S$ constructible 
that effaces $\Ext^{q-1}_{Sh(X)}(F',Q)$ if $q>1$. If $q=1$,
then $Q \to S$ is chosen so as to efface the
cokernel of $ \Hom_{Sh(X)}(F,Q)    
\to \Hom_{Sh(X)}(F',Q) $. The long exact sequence
of $\Ext$ shows that $P \to S$ effaces $\Ext^q_{Sh(X)}(F'',P)$.
 Part (a) follows in the same manner. 
\qed\end{proof}

Corollary~3.3 below is an immediate consequence of Lemma~3.2(a).

\renewcommand{\thethm}{3.3}
%%%
\begin{cor} If $0=F^n\subset F^{n-1} \subset \cdots \subset
F^1 \subset F^0=F$ and if all the $F^i/F^{i+1}$ are admissible,
then $F$ is admissible.
\end{cor}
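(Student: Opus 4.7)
The plan is to proceed by induction on the length $n$ of the filtration, with Lemma~3.2(a) supplying the inductive step. This is advertised as an immediate consequence, so the work is really just bookkeeping.

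For the base case $n=1$ the filtration reads $0 = F^1 \subset F^0 = F$, and then $F = F^0/F^1$ is admissible by hypothesis. For the inductive step, assume the result for filtrations of length less than $n$, and consider
\[
0 = F^n \subset F^{n-1} \subset \cdots \subset F^1 \subset F^0 = F
\]
with all successive quotients admissible. The truncated filtration $0 = F^n \subset \cdots \subset F^1$ has length $n-1$ and its successive quotients are a subset of the original ones, hence still admissible; by the inductive hypothesis, $F^1$ is admissible.

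Now apply Lemma~3.2(a) to the short exact sequence
\[
\xymatrix@1{0 \ar[r] & F^1 \ar[r] & F^0 \ar[r] & F^0/F^1 \ar[r] & 0.}
\]
By hypothesis $F^0/F^1$ is admissible, and by induction $F^1$ is admissible, so the lemma (in the form ``$F''$ admissible and $F'$ admissible imply $F$ admissible'') gives that $F = F^0$ is admissible.

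There is no real obstacle here: the only subtle point is simply to notice that Lemma~3.2(a) only needs $F''$ together with \emph{one} of $\{F',F\}$ to be admissible in order to conclude admissibility of the third, which is exactly what lets the two-term induction go through cleanly.
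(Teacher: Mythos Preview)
Your proof is correct and is exactly the argument the paper has in mind: the paper simply records that Corollary~3.3 is ``an immediate consequence of Lemma~3.2(a),'' and your induction on the filtration length, using Lemma~3.2(a) for the short exact sequence $0 \to F^1 \to F \to F/F^1 \to 0$, is precisely how one unpacks that.
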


\renewcommand{\thethm}{3.4}

\begin{prop} If $F$ is constructible
 on $X$ and $f:X \to Y$ is a morphism, then $\derived^qf_*F$ 
is constructible for all $q \geq 0$.
\end{prop}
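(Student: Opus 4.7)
The plan is to verify separately the two conditions required for constructibility of $\derived^q f_*F$ on $Y$: weak constructibility, and membership of every stalk in $\mathcal{N}$.

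First I would reduce to the proper case by Nagata compactification: factor $f = \bar f \circ j$ where $j : X \hookrightarrow \bar X$ is an open immersion and $\bar f : \bar X \to Y$ is proper. Next I would choose a Whitney stratification of $\bar X$ that refines the decomposition $\bar X = X \sqcup (\bar X - X)$ and whose restriction to $X$ refines a stratification of $X$ on whose strata $F$ is locally constant. Applying Thom's first isotopy lemma to the proper stratified map $\bar f$, I obtain a finite stratification $\{Y_\beta\}$ of $Y$ such that, over each $Y_\beta$, the restriction $\bar f^{-1}(Y_\beta) \to Y_\beta$ is a topologically locally trivial fibration whose local trivializations preserve the stratification of $\bar X$ (and hence carry the open subset $X$ and the strata on which $F$ is locally constant to product stratifications).

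This is precisely the setting of hypotheses~(H1)--(H3) of Proposition~1.3A, applied locally on $Y_\beta$ with $\overline L = \bar X \times_Y Y_\beta$, $L = X \times_Y Y_\beta$, and $A = F|L$. Corollary~1.3B then identifies the stalk of $\derived^q f_*F$ at each $y \in Y_\beta$ with $\homology^q(f^{-1}(y), F|f^{-1}(y))$, and the trivializations identify these stalks compatibly along $Y_\beta$; thus $\derived^q f_*F | Y_\beta$ is locally constant, establishing weak constructibility. For the stalk condition, the fiber $f^{-1}(y)$ is a $k$-variety on which $F$ restricts to a constructible sheaf (the stratification of $X$ restricts to one of $f^{-1}(y)$ and the stalks are unchanged), so Proposition~1.6 yields $\homology^q(f^{-1}(y), F|f^{-1}(y)) \in \mathcal{N}$, completing the argument.

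The main obstacle is the stratification step: producing a stratification of $Y$ together with topological local trivializations of $\bar f$ that simultaneously respect the open subset $X$ and the strata carrying $F$ as a locally constant sheaf requires Whitney stratifications and Thom's first isotopy lemma. These are standard but lie outside the elementary framework of Sections~1--2, consistent with the author's announcement in the introduction that Section~3 relies on Whitney stratifications and dévissage.
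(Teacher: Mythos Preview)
Your approach is close in spirit to the paper's: both rely on Whitney stratifications, Thom's isotopy lemma, and Proposition~1.6. The paper, however, does not compactify and invoke Proposition~1.3A in one step; it reduces instead to the two cases $f$ proper and $f$ an open immersion (the general case following from a factorisation $f=\bar f\circ j$ and the Grothendieck spectral sequence). For $f$ proper, ordinary proper base change and Thom's lemma suffice; for $f$ an open immersion, one Whitney-stratifies the target so that $f_!F$ is locally constant on each stratum, and the local normal-slice product structure yields both local constancy of $\derived^q f_*F$ on strata and, via finiteness of the link, membership of stalks in~$\mathcal{N}$.

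Your one-step route has a genuine gap: hypothesis~(H1) of Proposition~1.3A need not hold, even after restricting to a stratum $Y_\beta$. Recall that~(H1) asks for a closed $L_1\subset L$ with $f|L_1$ proper and $A$ locally constant on $L\setminus L_1$; since $\bar f$ is proper, $f|L_1$ is proper only if $L_1$ is closed in $\overline L$, so $L_1$ must be closed in $\overline L$, lie inside the open $L$, and contain the locus where $F$ fails to be locally constant. Take $X=\A^3$, $Y=\A^1$, $f(x,y,z)=z$, and $F$ locally constant precisely off $Z=\{xy=1\}$: the closure of $Z$ in any compactification $\overline X$ meets $\overline X\setminus X$ over every point of $Y$, so no such $L_1$ exists over any nonempty open of~$Y$. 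What Thom's lemma actually hands you is condition~(H4) from the proof of Proposition~1.3A (the local product structure), not~(H1)--(H3); you could repair your argument by invoking~(H4) directly, but you would then owe a justification of~(H4)(iii), namely that $F$, restricted to a trivialised tube, is pulled back from the fibre direction. The paper's two-case split sidesteps this issue.
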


\begin{proof} This proposition is by now well known; the 
conventional assumption ``$\mathcal{N}$= 
all Noetherian $R$-modules'' plays no role in the proof.
It suffices to prove the assertion 
in the two cases:(i) $f$ is proper, and (ii) $f$ is an open
immersion. 

When $f$ is proper, the proper base-change theorem, the
 existence of Whitney stratifications, and Thom's
isotopy lemma (see \cite[page~41]{nori:GM}) together show that 
 $\derived^qf_*F$ is weakly constructible. By proper base-change,
the stalk of this sheaf at $y \in Y(\C)$ coincides with
$\homology^q(f^{-1}y, F|f^{-1}y)$, which is an object of 
$\mathcal{N}$, by Proposition~1.6. Thus 
  $\derived^qf_*F$ is constructible.

When $f$ is an open immersion, one chooses a Whitney  
stratification: $Y$= finite disjoint union of $Y_i$ so
that $f_!F|Y_i$ is locally constant for all $i$. It follows
that
$\derived^qf_*F|Y_i$ is locally constant, because we 
are in a ``product situation''. The links $L(p)$ (see \cite[page~41]{nori:GM})
being finite simplicial complexes, we're assured (e.g., by 
\cite[Prop.~8.1.4(ii)]{nori:KS}) that the stalks of  $\derived^qf_*F|Y_i$ 
are objects of $\mathcal{N}$. This completes the proof.
\qed\end{proof}

\renewcommand{\thethm}{3.5}
%%%\paragraph{\textbf{Lemma~3.5.}} 
\begin{lem}Let $F$ be an object of $Sh(X)$. 
\begin{enumerate}
\item[{\rm (a)}] If $F$ and $F[U]$ are
both admissible where $U \subset X$ is Zariski
 open, then so is $F/F[U]$. 
\item[{\rm (b)}] Let $V$ and $W$ be Zariski
open in $X$. Assume that $F[V]$, $F[W]$ and $F[V \cap W]$
are admissible. Then $F[V \cup W]$ is also admissible (see 3.1 for notation).
\end{enumerate}
\end{lem}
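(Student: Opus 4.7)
The plan is to reduce both parts to Lemma~3.2 via an appropriate short exact sequence (and, for part (b), the second isomorphism theorem).

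For part (a), I would apply Lemma~3.2(b) to $0 \to F[U] \to F \to F/F[U] \to 0$. The hypotheses supply admissibility of $F[U]$ and $F$, so the content is the effaceability on $C(X)$ of the cokernel of $\Hom_{Sh(X)}(F,\cdot) \to \Hom_{Sh(X)}(F[U],\cdot)$. Given $P \in C(X)$, with $j: U \hookrightarrow X$ and $i: Y \hookrightarrow X$ the complementary open and closed inclusions, I propose the embedding
$$\sigma: P \hookrightarrow j_*j^*P \oplus i_*i^*P$$
obtained by pairing the two adjunction units. A stalk-by-stalk inspection shows $\sigma$ is monic (over $U$ the first component is an isomorphism on stalks, over $Y$ the second component is), and by Proposition~3.4 the target lies in $C(X)$. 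One then checks that the cokernel actually vanishes at this enlargement: $\Hom(F[U], i_*i^*P) = \Hom_U(j^*F, j^*i_*i^*P) = 0$ since $j^*i_* = 0$; and on the other summand both $\Hom(F, j_*j^*P)$ and $\Hom(F[U], j_*j^*P)$ identify with $\Hom_U(j^*F, j^*P)$ (via the $(j^*,j_*)$ and $(j_!,j^*)$ adjunctions, respectively), and the canonical restriction map between them becomes the identity under these identifications.

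For part (b), view $F[V]$ and $F[W]$ as subsheaves of $F[V \cup W]$. A stalk computation gives $F[V] + F[W] = F[V \cup W]$ and $F[V] \cap F[W] = F[V \cap W]$, so the second isomorphism theorem supplies a natural isomorphism
$$F[V \cup W]/F[V] \;\cong\; F[W]/F[V \cap W].$$
Since $F[V \cap W] = F[W][V \cap W]$ (obtained by applying $G \mapsto G[V \cap W]$ to $G = F[W]$), part (a) applied to the pair $(F[W], V \cap W)$ shows that the right-hand side is admissible; hence so is $F[V \cup W]/F[V]$. Finally, Lemma~3.2(a) applied to $0 \to F[V] \to F[V\cup W] \to F[V \cup W]/F[V] \to 0$ yields the admissibility of $F[V \cup W]$, since both its kernel and quotient are admissible.

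The main obstacle is the cokernel effacement in part (a); the insight is that restriction along $F[U] \to F$ becomes harmless after enlarging $P$ in the direction of $j_*j^*P$ (which makes the map surjective over $U$) together with $i_*i^*P$ (which absorbs the failure of $P \to j_*j^*P$ to be monic). Once this is secured, part (b) follows by purely formal manipulations with Lemma~3.2 and the Mayer--Vietoris identity.
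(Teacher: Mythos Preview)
Your proof of part (a) is correct and matches the paper's argument essentially verbatim: the same embedding $P \hookrightarrow j_*j^*P \oplus i_*i^*P$, the same adjunction computations, and the same appeal to Lemma~3.2(b) and Proposition~3.4.

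For part (b) your argument is also correct, but it is organized differently from the paper's. The paper applies Lemma~3.2(b) directly to the Mayer--Vietoris sequence
\[
0 \to F[V\cap W] \to F[V]\oplus F[W] \to F[V\cup W] \to 0,
\]
reusing the embedding $P \hookrightarrow Q$ from part (a) with $U=V\cap W$; the point is that surjectivity of $\Hom(F,Q)\to\Hom(F[V\cap W],Q)$ (established in (a)) forces surjectivity of $\Hom(F[V],Q)\oplus\Hom(F[W],Q)\to\Hom(F[V\cap W],Q)$, since the inclusion $F[V\cap W]\hookrightarrow F$ factors through $F[V]$. You instead use the second isomorphism theorem to identify $F[V\cup W]/F[V]\cong F[W]/F[V\cap W]$, invoke part (a) to see this quotient is admissible, and finish with Lemma~3.2(a). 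Your route has the virtue of reducing (b) cleanly to (a) without revisiting the cokernel condition; the paper's route is a hair more direct in that it needs only one short exact sequence and one application of Lemma~3.2. The two are essentially repackagings of the same Mayer--Vietoris identity.
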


\begin{proof} Both parts follow from Lemma~3.2(b). 
For (a), it is sufficient to show that 
every constructible $P$ is
contained in a constructible $Q$ with  
$ \Hom_{Sh(X)}(F,Q) \to \Hom_{Sh(X)}(F[U],Q)$ surjective.
 Denoting by $j$ and $i$ the inclusions of $U$ and its complement
 $Y$ in $X$ respectively, 
the natural arrow  
$P \to Q=j_*j^*P \oplus i_*i^*P$ is a monomorphism that has the  
desired property because  $ \Hom_{Sh(X)}(F[U], i_*i^*P)=0$ and 
$$
\Hom_{Sh(X)}(F[U], j_*j^*P )=  \Hom_{Sh(X)}(F, j_*j^*P)=
  \Hom_{Sh(U)}(j^*F, j^*P).
$$
That $Q$ is constructible has been shown in Proposition~3.4 above.

For (b), consider the Mayer-Vietoris exact sequence:
$$\xymatrix{
0 \ar[r] & F[V \cap W] \ar[r] & F[V] \oplus F[W] \ar[r] & F[V \cup W] \ar[r] & 
0.
}
$$
We obtain $Q$ from $P$ as in the proof of part (a) by
putting $U=V \cap W$. The surjectivity of  
$\Hom_{Sh(X)}(F,Q) \to \Hom_{Sh(X)}(F[V \cap W],Q)$ 
certainly implies the surjectivity of 
$$ 
\Hom_{Sh(X)}(F[V],Q) \oplus \Hom_{Sh(X)}(F[W],Q) 
\to \Hom_{Sh(X)}(F[V \cap W],Q).
$$ 
This completes the proof of the lemma.
\qed\end{proof}

\renewcommand{\thethm}{3.6}
%%%\paragraph{\textbf{Proposition 3.6.}} 

\begin{prop}Let $U \subset X$ be Zariski open. Then $R_X[U]$ is admissible 
(see 3.1 for notation).
\end{prop}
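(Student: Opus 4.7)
The plan is to translate the admissibility of $R_X[U] = j_!j^*R_X$ into an effacement statement for singular cohomology of $U$, and to reduce to the affine case handled by Corollary~2.4. For the open immersion $j:U\to X$, the functor $j_!$ is exact and left adjoint to $j^*$, so $j^*$ sends injectives to injectives; consequently, for every sheaf $G$ on $X$,
$$
\Ext^q_{Sh(X)}(R_X[U], G) \;=\; \Ext^q_{Sh(U)}(R_U,\, j^*G) \;=\; \homology^q(U,\, j^*G).
$$
Therefore the admissibility of $R_X[U]$ is equivalent to: for every $P$ in $C(X)$ and every $q>0$, there is a monomorphism $P\hookrightarrow Q$ in $C(X)$ so that $\homology^q(U,j^*P)\to\homology^q(U,j^*Q)$ vanishes.

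Suppose first that $U$ is affine. Given $P$ in $C(X)$, the restriction $j^*P$ is constructible on $U$, and Corollary~2.4 applied on $U$ produces a monomorphism $a: j^*P\hookrightarrow G'$ in $C(U)$ effacing $\homology^q(U,\cdot)$ for all $q>0$. Writing $i:Y=X\setminus U\to X$ for the complementary closed immersion, set
$$
Q \;=\; j_*G'\,\oplus\, i_*i^*P.
$$
Both summands are constructible by Proposition~3.4, so $Q$ is in $C(X)$. The canonical map $P\to j_*j^*P\oplus i_*i^*P$ is a monomorphism (injective via the first factor at stalks in $U$, via the second at stalks in $Y$), and composing its first factor with $j_*a$ yields a monomorphism $P\hookrightarrow Q$. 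Since $j^*j_*G'=G'$ and $j^*i_*=0$, we have $j^*Q=G'$, so $\homology^q(U,j^*P)\to\homology^q(U,j^*Q)=\homology^q(U,G')$ is just $\homology^q(U,a)$, which vanishes for $q>0$ by the choice of $a$.

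For general $U$, I would induct on the minimum number $c(U)$ of affine opens needed to cover $U$, which is finite as $X$ is a Noetherian variety. The base case $c(U)=1$ is the previous paragraph. If $c(U)\geq 2$, write $U=V\cup W$ with $V$ affine and $W$ a union of $c(U)-1$ affine opens. Since the variety $X$ is separated, intersections of affine opens are affine, so $V\cap W$ is itself a union of $c(U)-1$ affine opens. The inductive hypothesis then gives admissibility of $R_X[V]$, $R_X[W]$ and $R_X[V\cap W]$, and Lemma~3.5(b) applied with $F=R_X$ delivers the admissibility of $R_X[V\cup W]=R_X[U]$.

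The main obstacle is constructing the effacer $Q$ in the affine case: the sheaf $G'$ produced by Corollary~2.4 lives on $U$ only, and it must be lifted to a constructible sheaf on $X$ whose restriction to $U$ remains $G'$ without destroying the monomorphism from $P$. Proposition~3.4 is exactly the result that makes this possible, since it guarantees $j_*G'\in C(X)$, and the identity $j^*j_*G'=G'$ for an open immersion is what carries the effacement back to $U$.
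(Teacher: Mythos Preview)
Your argument is correct and follows the same overall architecture as the paper: induction on the number of affine opens covering $U$, with the inductive step handled exactly as in the paper via Lemma~3.5(b) and separatedness.

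The only difference is in the affine base case. The paper invokes Remark~3.8(a): for the adjoint pair $(j_!,j^*)$ with $\mathcal{A}=C(U)$, $\mathcal{B}=C(X)$, the effacer on $X$ is produced as the push-out $B'=P\sqcup_{j_!j^*P}j_!G'$, which lies in $C(X)$ automatically because $j_!$ preserves constructibility. You instead build $Q=j_*G'\oplus i_*i^*P$ and appeal to Proposition~3.4 to ensure $j_*G'$ is constructible. Both constructions restrict to $G'$ on $U$, so both work; the paper's version is slightly more economical in that it does not need the (nontrivial) constructibility of $j_*$, whereas your version is perhaps more transparent since it writes down the effacer by hand rather than through an abstract adjunction argument.
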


\begin{proof} We proceed  by induction on the number $n$ of
affine open subsets required to cover $U$. Now 
assume that $U$ is affine (the case: $n=1$). 
Corollary~2.4 says that $R_U$ is admissible in $Sh(U)$. It 
follows that  $R_X[U]$ is admissible in $Sh(X)$ 
(see e.g., Remark~3.8(a)).  
For the general case, write $U=V \cup W$ where $V$ is covered by
$n-1$ affine opens and $W$ is affine open. Because $X$ is 
separated, by Chevalley, $V \cap W$ is covered by $n-1$ affine
open subsets in $X$, so we may assume the result for $V,W$ and
 $V \cap W$. The admissibility of $R_X[U]$ follows from Lemma~3.5(b).  
\qed\end{proof}

\begin{proof}[of Theorem~2]
 The proof of the admissibility of $R_X$ in Proposition~3.6 above actually
proves Theorem~2. In any case, the admissibility plus
the vanishing of $\homology^q(X,F)$ for $q>2 \dim.(X)$
implies the theorem.
\qed\end{proof}

\renewcommand{\thethm}{3.7}
%\paragraph{\textbf{Remark~3.7}}
\begin{remark}
Let $F$ be a constructible sheaf on $X$ with the additional property
that $X$ is the union of a finite collection of Zariski locally closed 
subsets $Y_i$ so that for each $i$, $F|Y_i$ is a \emph{constant 
sheaf}. Assume that $R$ is Noetherian and that $\mathcal{N}$ is
the category of all finitely generated $R$-modules. In this case,
one can check that $F$ is admissible. We will not use this however
in the sequel. 
\end{remark} 

\renewcommand{\thethm}{3.8}

%\paragraph{\textbf{Remark~3.8 (Avoiding the use of injective objects)}} 
\begin{remark}[Avoiding the use of injective objects] 
Let $F: \mathcal{A} \to\nolinebreak \mathcal{B}$
and $G:\mathcal{B} \to \mathcal{A}$ be adjoint functors,
viz., 
$$
\phi (A,B) :\Hom(FA,B) \to \Hom(A,GB)
$$ 
for all objects $A$ of $\mathcal{A}$
 and $B$ of $\mathcal{B}$, is an 
\emph{isomorphism} functorial in $A$ and $B$ (see \cite[Chapter~IV,~page~80]{nori:Mac}). Assume that $\mathcal{A}$
and $\mathcal{B}$ are Abelian categories, and that $F$ is left
exact. Then it is standard (see \cite{nori:Toh}) that $G$ takes injectives to
injectives. We will not assume that $\mathcal{B}$ 
possesses injective objects. We will observe instead that the 
left exactness of $F$ ensures:
   
\emph{For every monomorphism} $u:GB \to A$, \emph{there is a monomorphism} 
\linebreak $v:B\to B'$ \emph{so that} $Gv:GB \to GB'$ \emph{factors through
 u, i.e. there is} $w:A \to GB'$ \emph{so that} $Gv=w \circ u$.
 \emph{In particular, if} $H: \mathcal{A} \to \mathcal{C}$ \emph
{is effaceable, then} $H \circ G$ \emph{is also effaceable.}
The  $v:B \to B'$ is obtained simply as the push-out of the
 natural arrow $\epsilon (B): FGB \to B$ with the monomorphism $Fu:FGB \to FA$.
Denote by $t:FA \to B'$ the resulting arrow. The $w$ is
then obtained as $Gt\circ \eta (A)$ where $\eta (A):A \to GFA$
is the natural arrow. For the dfns of $\epsilon (B)$
 and $\eta (A)$ see \cite[Thm.~1,~page~82]{nori:Mac}.
 The $(F,G)$ we are concerned with are:
\begin {enumerate}
\item [(a)] ($j_!$ ,$j^*$) where $j:U \to X$ is an open immersion and
 $\mathcal{A}=C(U)$ and $\mathcal{B}=C(X)$. In particular,
 the admissibility
of $Q$ on $U$ implies the admissibility of $j_!Q$ on $X$.

\item [(b)] ($f^*$ ,$f_*$) where $f:X \to Y$ is a morphism and 
 $\mathcal{A}=C(Y)$ and $\mathcal{B}=\nolinebreak C(X)$.

\item [(c)] ($P \otimes \cdot$ , $\HHom(P,\cdot)$). 
Here R is assumed to be commutative, and $P$ is weakly constructible 
on $X$ with all stalks finitely generated projective modules  
and $\mathcal{A}=\mathcal{B}=C(X)$. 
\end{enumerate}
\end{remark}

\renewcommand{\thethm}{3.9}
%\paragraph {\textbf{Definition 3.9 (elementary, projectively elementary)}} 
\begin{defn}[elementary, projectively elementary] 
A sheaf $F$ on $X$ 
is elementary if there are Zariski open  $V \subset U \subset X$
and a locally constant sheaf $L$ on $U$ so that $F$ is 
isomorphic to $j_!(L/L[V])$, where 
 $j:U \to X$ denotes the inclusion.

If all the stalks of $L$ are finitely generated projective 
 $R$-modules, then $F$ will be called \emph{projectively elementary}.  
\end{defn}

\renewcommand{\thethm}{3.10}
\begin{prop} Every weakly constructible
sheaf $F$ on $X$ has a finite filtration 
$$
0=F^n\subset F^{n-1} \subset\cdots \subset F^1 \subset F^0=F
$$
so that $F^r/F^{r+1}$ is a \emph{direct summand} of
 an elementary sheaf $T^r$ on $X$ for all $r$. Every stalk of
 $T^r$ is a finite direct sum of stalks of $F$. In particular,
 if all the stalks of $F$ are finitely generated projective 
$R$-modules, then the $T^r$ above are projectively elementary.
\end{prop}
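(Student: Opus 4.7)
The plan is to induct on the dimension of the support of $F$. Given a weakly constructible $F$ on $X$, pick a Zariski open $U \subset X$ with $F|U$ locally constant (a union of top-dimensional smooth strata in a fixed Whitney stratification will do), and set $Z = X - U$, with $i:Z\to X$ the closed embedding and $j:U\to X$ the open one. The exact sequence $0 \to j_!(F|U) \to F \to i_*(F|Z) \to 0$ supplies the top of the filtration: $F^1 := j_!(F|U)$ is itself elementary (take $V = \emptyset$ and $L = F|U$ in Definition~3.9), and its stalks are stalks of $F$ on $U$ and $0$ elsewhere, so the stalk condition is trivially met.

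For $F/F^1 = i_*i^*F$, the support of $i^*F$ on $Z$ has strictly smaller dimension; the inductive hypothesis applied to $i^*F$ produces a filtration $0 = G^m \subset \cdots \subset G^0 = i^*F$ in which $G^r/G^{r+1}$ is a direct summand of an elementary $S^r$ on $Z$, satisfying the stalk condition. Because $i$ is a closed immersion, $i_*$ is exact, so applying it yields a filtration of $i_*i^*F$ on $X$ with subquotients $i_*(G^r/G^{r+1})$, each a direct summand of $i_*S^r$. Splicing with $0 \subset F^1 \subset F$ gives the sought filtration of $F$, provided one knows that $i_*S$ is a direct summand of an elementary sheaf on $X$ (with the stalk condition) whenever $S$ is elementary on $Z$.

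Writing $S = j'_!(L'/L'[V'])$ on $Z$, one computes $i_*S \cong \kappa_!(L'|_{U'-V'})$, where $\kappa : U'-V' \hookrightarrow X$ is the induced locally closed embedding. Thus everything reduces to the following core claim: for every locally closed $Y \subset X$ and locally constant sheaf $L$ on $Y$, the sheaf $k_!L$ (with $k:Y\to X$) is a direct summand of an elementary sheaf on $X$ whose stalks are finite direct sums of stalks of $L$. The natural strategy is to produce a Zariski open $\tilde U \supset Y$ with $Y$ closed in $\tilde U$, together with a locally constant $\tilde L$ on $\tilde U$ such that $L$ is a direct summand of $\tilde L|Y$; then $\tilde j_!(\tilde L/\tilde L[\tilde U - Y])$, with $\tilde j:\tilde U\to X$, is the required elementary sheaf. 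One builds $\tilde L$ by choosing a finite \'etale cover $p:Y' \to Y$ trivializing $L$, extending $p$ to a finite \'etale cover $\tilde p:\tilde U' \to \tilde U$ of a Zariski neighborhood of $Y$ (after refining $Y$ to its smooth locus and handling the lower-dimensional boundary by the inductive hypothesis), and setting $\tilde L := \tilde p_*(M_{\tilde U'})$ with $M$ the fiber of $L$. The unit $L \to p_*p^*L = \tilde L|Y$ is automatically a monomorphism; the principal obstacle — and the delicate technical step on which the proof hinges — is upgrading this monomorphism to a split one, since straightforward trace/transfer arguments would require invertibility hypotheses on $R$ that the proposition does not impose.
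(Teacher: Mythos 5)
Your overall architecture (induct on the dimension of the support, split off $j_!(F|U)$ with $F|U$ locally constant, push the rest into a smaller closed set, and reduce everything to the core claim that $k_!L$ is a direct summand of an elementary sheaf for $k:Y\to X$ locally closed and $L$ locally constant) is sound and runs parallel to the paper. The gap is exactly at the step you flag, and it is not a removable technicality: the unit $L \to p_*p^*L$ of a finite \'etale cover $p$ of degree $n>1$ is in general \emph{not} split over an arbitrary ring $R$. Take $L=R_Y$ and $p$ a connected degree-$n$ cover; on stalks the unit is the diagonal $R \to \bigoplus_{y'\in p^{-1}(y)}R$, and a retraction is a $\pi_1(Y,y)$-equivariant map $R[p^{-1}(y)]\to R$; equivariance on the transitive fiber forces it to take a common value $a$ on the basis vectors, so the diagonal element goes to $na$, and $na=1$ forces $n\in R^{\times}$. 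So no splitting exists unless the degree is invertible in $R$ --- precisely the hypothesis the proposition does not allow, and there is no cleverer splitting to be found. (A secondary problem: extending the trivializing cover $p$ to a finite \'etale cover of a Zariski neighbourhood of $Y$ in $X$ is also not possible in general --- for a smooth plane curve $Y\subset\mathbb{P}^2$ of positive genus, every Zariski neighbourhood of $Y$ is the complement of a finite set, hence simply connected --- so even that step would need shrinking and a separate argument.)

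The paper's proof avoids both difficulties by using the adjunction in the opposite direction. After shrinking to an affine open $U'$ meeting the top-dimensional part of the support of $F$ in a dense (in general, nonempty) open where $F$ is locally constant, Noether normalisation gives a finite surjective map from $U'\cap\,$(support of $F$) to $\A^d$; since this is a map to affine space from a closed subvariety of an affine variety, it extends to $h':U'\to\A^d$. One then shrinks the base to $W\subset\A^d$ so that, with $U={h'}^{-1}W$, $Z=U\cap\,$supp$\,F$ and $f=h|Z:Z\to W$, the map $f$ is finite \'etale, and sets $L=h^*f_*(F|Z)$, a locally constant sheaf on $U$ whose stalks are finite direct sums of stalks of $F$. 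The splitting now comes for free: because $f$ is \'etale, the diagonal is open and closed in $Z\times_W Z$, so $f^*f_*G\cong G\oplus(\text{complement})$ canonically for $G=F|Z$, with no trace map and no invertibility hypothesis on $R$; hence $F[U]=j_!(F|U)$ is a direct summand of the elementary sheaf $j_!(L/L[V])$, $V=U-Z$, and induction on the dimension of the support (Noetherian induction for general $X$) completes the filtration. In short: rather than extending a trivializing cover of $L$ and splitting the unit $L\to p_*p^*L$ (which fails), push $L$ forward along a finite \'etale map onto an open subset of affine space, pull back along an extension of that map to the ambient open set, and split off $L$ from $f^*f_*L$ via the open-and-closed diagonal.
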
 

We will assume the above proposition and proceed. 
Proposition~3.10 is proved at the end of this section.

\begin{proof}[of Theorem~3]
We first assume that $F$ is projectively elementary.

Let $L,U,V,F,j$ be as in the dfn of ``projectively elementary''.
Because $L$ is a locally constant sheaf
on a variety $U$ with all stalks finitely generated projective,
we see that $\EExt^q(L,D)=0$ for all $q>0$ and for all $D$ on $U$.
The spectral sequence (see \cite[Thm.~4.2.1,~page~188]{nori:Toh}) shows that  
$\Ext^q_{Sh(U)}(L,D)=\homology^q(U,\HHom(L,D))$. 
From Theorem~2 and Remark~3.8(c), it follows that $L$ is admissible on $U$. 
Replacing $(L,U)$ by $(L|V,V)$ we see that that $L|V$ is admissible on $V$ as 
well. From Remark~3.8(a), we see that $L[V]$ is admissible on $U$. 
By Lemma~3.5(a),  $L/L[V]$ is admissible on $U$. By Remark~3.8(a) once again, 
 $F=j_!(L/L[V])$ is also admissible on $X$. 

Evidently, any direct summand of an admissible is admissible. The
admissibility of $P$ as in Theorem~3(a) now follows from Proposition~3.10 and 
Corollary~3.3. 
\qed\end{proof}

\renewcommand{\thethm}{3.11}

\begin{thm}Assume that 
$\mathcal{A'},\mathcal{A},
\mathcal{B'}, \mathcal{B}, F$ and $G$  
satisfy the properties (1),(2),(3) and (4) below.
It then follows that the functors 
$\derived^eG|\mathcal{B'}$ are effaceable 
for all $e>0$. 
\begin {enumerate}
\item[{\rm (1)}] $\mathcal{A'}$ is a full Abelian subcategory 
of an Abelian category $\mathcal{A}$. Every object of 
 $\mathcal{A}$ that is isomorphic to an object of 
$\mathcal{A'}$ is an object of $\mathcal{A'}$. The
category $\mathcal{A}$ possesses enough injectives.
For every object $A'$ of $\mathcal{A'}$ and for every
$q>0$, the functor $\Ext^q_{\mathcal{A}}(A', \cdot)|\mathcal{A'}$ 
is effaceable. 
\item [{\rm (2)}] $\mathcal{B'}$ is a full Abelian subcategory 
of an Abelian category $\mathcal{B}$. All the properties in
(1) above hold for $\mathcal{B'}$ and $\mathcal{B}$.
\item [{\rm (3)}] $G: \mathcal{B}\to \mathcal{A} $ is a left
exact functor so that $\derived^qGB'$ is an object of 
$\mathcal{A'}$ for all objects $B'$ of $\mathcal{B'}$
and for all $q \geq 0$. 
\item[{\rm (4)}] $F: \mathcal{A}\to \mathcal{B} $ is a left 
adjoint of $G$. In addition, $F$ is left exact. Furthermore, 
 $FA'$ is an object of $\mathcal{B'} $ for all objects $A'$ of $\mathcal{A'}$. 
\end{enumerate}
\end{thm}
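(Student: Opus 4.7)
The plan is to proceed by induction on $e$, using the Grothendieck spectral sequence as the organizing tool. The first observation is that although (4) only asserts left exactness of $F$, $F$ is also a left adjoint, so it preserves all colimits and in particular is right exact; combined with left exactness, $F$ is therefore \emph{exact}. Consequently $G$ preserves injectives, since $\Hom_{\mathcal A}(-, GI) \cong \Hom_{\mathcal B}(F-, I)$ is exact when $I$ is injective in $\mathcal B$ and $F$ is exact. For every $A \in \mathcal A$ we thus have the natural Grothendieck spectral sequence
$$
E_2^{p,q}(A, B) = \Ext^p_{\mathcal A}\bigl(A, \derived^q G(B)\bigr) \Rightarrow \Ext^{p+q}_{\mathcal B}(FA, B).
$$

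Fix $B \in \mathcal B'$ and $e \geq 1$ and put $A = \derived^e G(B)$, which lies in $\mathcal A'$ by (3). The identity $\mathrm{id}_A \in E_2^{0,e}(A, B) = \Hom_{\mathcal A}(A, \derived^e G(B))$ is sent by any $\alpha: B \to B'$ to $\derived^e G(\alpha) \in \Hom_{\mathcal A}(A, \derived^e G(B'))$, so effacing $\derived^e G$ at $B$ within $\mathcal B'$ reduces to producing a monomorphism $B \hookrightarrow B''$ in $\mathcal B'$ that kills $\mathrm{id}_A$ in $E_2^{0,e}(A, -)$.

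The strategy is to kill $\mathrm{id}_A$ in two stages. First, iteratively kill the outgoing differentials $d_r: E_r^{0,e} \to E_r^{r,e-r+1}$ for $r = 2, \ldots, e+1$ by constructing a chain $B = B_0 \hookrightarrow B_1 \hookrightarrow \cdots \hookrightarrow B_{e+1}$ in $\mathcal B'$. For $2 \leq r \leq e$, the target is a subquotient of $\Ext^r_{\mathcal A}\bigl(A, \derived^{e-r+1} G(B_{r-1})\bigr)$; since $e - r + 1 < e$, the inductive hypothesis supplies a monomorphism $B_{r-1} \hookrightarrow B_r$ in $\mathcal B'$ with $\derived^{e-r+1} G(B_{r-1} \to B_r) = 0$, which kills $d_r$ on the image of $\mathrm{id}_A$ by naturality. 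For $r = e+1$, the target is a subquotient of $\Ext^{e+1}_{\mathcal A}(A, GB_e)$; using (1), choose a monomorphism $u: GB_e \hookrightarrow A''$ in $\mathcal A'$ effacing this group, and apply the Remark 3.8 construction --- the pushout of $\epsilon_{B_e}: FGB_e \to B_e$ with $Fu: FGB_e \to FA''$, all three objects lying in $\mathcal B'$ by (3) and (4) --- to obtain $B_e \hookrightarrow B_{e+1}$ in $\mathcal B'$ with $G(B_e \to B_{e+1})$ factoring through $u$, hence killing $d_{e+1}$.

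At this stage $\mathrm{id}_A$ has become a class in $E_\infty^{0,e}(A, B_{e+1}) \subseteq E_2^{0,e}(A, B_{e+1})$, which is the edge-map image of some $\xi \in \Ext^e_{\mathcal B}(FA, B_{e+1})$. Since $FA \in \mathcal B'$ by (4), hypothesis (2) produces a monomorphism $B_{e+1} \hookrightarrow B''$ in $\mathcal B'$ with $\xi \mapsto 0$ in $\Ext^e_{\mathcal B}(FA, B'')$. Naturality of the edge map then forces the image of $\mathrm{id}_A$ in $E_2^{0,e}(A, B'')$ to be zero, i.e., $\derived^e G(B \to B'') = 0$, which is the required effacement. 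The main technical point is checking that the key-lemma construction and the inductive modifications genuinely remain inside $\mathcal B'$ --- which requires $\mathcal B'$ to be closed under the relevant pushouts in $\mathcal B$, an assumption implicit in the ``full Abelian subcategory'' hypothesis.
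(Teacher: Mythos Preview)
Your proof is correct and follows essentially the same route as the paper's: induction on $e$, tracking the identity of $\derived^e GB$ through the Grothendieck spectral sequence $\Ext^p_{\mathcal A}(A,\derived^q GB)\Rightarrow \Ext^{p+q}_{\mathcal B}(FA,B)$, killing the differentials $d_2,\dots,d_e$ via the inductive hypothesis, killing $d_{e+1}$ via the adjunction construction of Remark~3.8, and then effacing the surviving $E_\infty^{0,e}$-class using hypothesis~(2). The only slip is cosmetic: your chain should begin at $B_1=B$ rather than $B_0=B$ to be consistent with your indexing $B_{r-1}\hookrightarrow B_r$ for the step that kills $d_r$.
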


\begin{proof}%[of Theorem~3.11] 
Proceeding by induction on $e$, we
assume the effaceability of $\derived^qG|\mathcal{B'}$ 
for all $q$ such that $0<q<e$. Let $f:A' \to \derived^eGB'$ 
be a homomorphism in $\mathcal{A}$,  
where $A'$ and $B'$ are objects of  
$\mathcal{A'}$ and $\mathcal{B'}$ respectively. We will
find a monomorphism $B' \to B'_{e+1}$ in $\mathcal{B'}$ 
so that the composite   
$$
\xymatrix{A' \ar[r] & \derived^eGB' \ar[r] & \derived^eGB'_{e+1}
}
$$ 
is zero.
Consider the functor $\Hom(A', \cdot): \mathcal{A} \to Ab$ 
where $Ab$ is the category of Abelian groups. By (4) above,
note
 that $\Hom(A',\cdot)\circ G=\Hom(FA',\cdot)$ and that $G$
takes injectives to injectives.  From \cite{nori:Toh}, for every
object $B$ of  $\mathcal{B}$, 
we get the spectral sequence below, functorial in $B$:
$$\mathrm{E}^{p,q}_2(B)=\Ext^p_{\mathcal{A}}(A', \derived ^qGB) 
\Rightarrow \Ext^{p+q}_{\mathcal{B}}(FA',B).
$$
We put $f=f_0$ and $B'=B'_0$. 
The given $f_0$ belongs to $\mathrm{E}^{0,e}_2(B'_0)$. 
Its differential $\delta f_0$ belongs to $ \mathrm{E}^{2,{e-1}}_2(B'_0)$. 
By the induction hypothesis, there is a 
a monomorphism $B'_0 \to B'_1$ in $\mathcal{B'}$ 
 that effaces $\derived^{e-1}GB'_0$. Thus $B'_0 \to B'_1$     
induces zero on $\mathrm{E}^{2,e-1}_2$. It follows that 
$\delta f_0 \mapsto 0$ under $B'_0 \to B'_1$. Denoting the
image of $f_0$ in $\mathrm{E}^{0,e}_2(B'_1)$ by $f_1$ we
see that $f_1 \in \mathrm{E}^{0,e}_3(B'_1)       \subset
 \mathrm{E}^{0,e}_2(B'_1)$.
Continuing in this manner, using the induction hypothesis alone, 
we obtain a chain of elements 
$f_i \in \mathrm{E}^{0,e}_{i+2}(B'_i)$ for $0 \leq i <e$ 
and monomorphisms $B'_i \to B'_{i+1}$ 
in $\mathcal{B'}$ 
for $0 \leq i < e-1$  that take  
 $f_i$ to $f_{i+1}$. Note that  this is meaningful because    
$\mathrm{E}^{0,e}_{i+3} \subset \mathrm{E}^{0,e}_{i+2}$. 
 Next note that 
$$
\delta f_{e-1} \in \mathrm{E}^{e+1,0}_{e+1}(B'_{e-1})
=\Ext^{e+1}_{\mathcal{A}}(A', GB'_{e-1}).
$$ 
Assumptions (1) and (4), combined
with Remark~3.8, imply that the functor 
$\Ext_{\mathcal{A}}^{e+1}(A', \cdot) \circ G|\mathcal{B'}$
 is effaceable. Thus we get a monomorphism  
  $B'_{e-1} \to B'_e$ in $\mathcal{B'}$
that effaces  $\Ext^{e+1}_{\mathcal{A}}(A', B'_{e-1})$, and
as before, this monomorphism takes  $f_{e-1}$ to $f_e$ with 
$f_e \in  \mathrm{E}^{0,e}_{e+2}(B'_e)= 
\mathrm{E}^{0,e}_{\infty}(B'_e)$.
Finally, because $e>0$, by (2), there
is a monomorphism $B'_e \to B'_{e+1}$ in $\mathcal{B'}$  
that effaces
 $\Ext^e_{\mathcal{B}}(FA',B'_e)$. Because  
 $  \mathrm{E}^{0,e}_{\infty}(B'_e)$ is a quotient of
 $\Ext^e(FA', B'_e)$, it follows that the induced homomorphism
$\mathrm{E}^{0,e}_{\infty}(B'_e) \to  
\mathrm{E}^{0,e}_{\infty}(B'_{e+1})$ is zero. Consequently, 
$f_e \mapsto 0$ in $\Hom(A', \derived^eGB'_e) \to
 \Hom(A', \derived^e GB'_{e+1})$. It follows that 
 $f \mapsto 0$ in $\Hom(A', \derived^eGB') \to
 \Hom(A', \derived^e GB'_{e+1})$. This 
proves our claim. 
 
Now let $f$ be the identity
homomorphism of  $ \derived^eGB'$. The theorem follows.
\qed\end{proof}

\begin{proof}[of Theorem~4] Given a morphism $f:X \to Y$,
we substitute 
$$
(f^*,f_*, Sh(Y), C(Y),Sh(X),C(X))= 
(F,G, \mathcal{A},  \mathcal{A'}, \mathcal{B}, \mathcal{B'})
$$ 
in Theorem~3.11. The assumptions (1) and (2) of Theorem~3.11 are  
valid by Theorem~3(b); assumption (3) is valid by Proposition~3.4, and 
assumption (4) is evident. So Theorem~4 is a consequence of Theorem~3.11. 
\qed\end{proof}

\begin{proof}[of Theorem~5] Once again, we apply
Theorem~3.11 to  
$$
\mathcal{A}=\mathcal{B}=Sh(X),\   \mathcal{A'}=\mathcal{B'}=C(X), 
\mbox{ and } (F,G)= (P \otimes \cdot\;, \HHom(P,\cdot)),
$$ 
for any constructible sheaf $P$ on $X$. To check that the assumptions
of Theorem~3.11 are valid, it remains to show that $\EExt^q(P,Q)$
 is constructible if both $P$ and $Q$ are constructible. By 
Proposition~3.10, it suffices to check this for $P=j_!L$ where $L$
 is locally constant on $U$ and $j:U \to X$ denotes the 
 inclusion.  But $\EExt^q(j_!L,Q)=\derived^q j_*\HHom (L,j^*Q)$
which is constructible by Proposition~3.4. This completes the
proof of Theorem~5.  
\qed\end{proof}

\begin{proof}[of Proposition~3.10]
Let $F$ be a weakly constructible sheaf 
on a quasi-projective variety $X$. Let dim.supp.$F=d$.
We will first find  Zariski open 
$V \subset U\subset X$, a locally constant sheaf $L$
 on $U$, and an isomorphism of $F|U$ with a direct summand of 
$L/L[V]$  (see 3.1 and 3.9 for notation).  

Let $Z_1$ be the support of
$F$. Let $Z_2$ be the union of all the $d$-dimensional
irreducible components of $Z_1$.
 Let $Z_3$  
be the largest Zariski open subset of $Z_2$
 so that $F|Z_3$ is locally constant. Let $U' \subset X$
be an affine open subset so that 
$U'\cap Z_1 = U' \cap Z_3$ and $U' \cap Z_3$ is Zariski
dense in $Z_3$. Choose a 
finite surjective morphism $ U'\cap Z_1 \to \A^d$ and extend
 it to a morphism $h':U' \to \A^d$. Let $W \subset \A^d$
 be non-empty Zariski open. Put $U={h'}^{-1}W$, $Z=U \cap Z_1$ 
and $V=U-Z$. Denote the restriction of $h'$ to $U$ 
 by $h:U \to W$. Put $f=h|Z:Z \to W$. 
 The sheaf $G=F|Z$ is locally constant by assumption.
The $W$ is chosen so that 
$f:Z \to W$ is an etale morphism (note that $f$ 
is already assumed to be a finite
 morphism); this ensures that $f_*G$
is locally constant. It follows that $L=h^*f_*G$ is
also a locally constant sheaf. Note that $f^*f_*G=L|Z$ has
 $G$ as a direct summand because $f$ is etale.
Denoting by $i:Z \to U$ the inclusion, we see that 
 $i_*G=F|U$ is a direct summand of $i_*(L|Z)=L/L[V]$. 

From the above, we see that $F[U]$ is a direct
summand of an elementary sheaf. Put $QF=F/F[U]$. Because  
dim.supp.$QF<d$, induction on dim.supp.$F$ proves
the proposition with the length of the desired 
filtration bounded
above by 1+dim.supp.$F$, when $X$ is quasi-projective.
In the general case,  one proceeds  by Noetherian induction, 
replacing the hypothesis `` $U' \cap Z_3$ is Zariski
dense in $Z_3$'' by `` $U' \cap Z_3$ is non-empty''.
\qed\end{proof}

\myaddress{The University of Chicago, 5734 S.University  Ave, Il 60637 USA}
\myemail{nori@math.uchicago.edu}

\end{document}